\renewcommand\emptyset{\varnothing}
\newcommand\barM{{\bar{M}}}
\newcommand\barmu{{\bar\mu}}
\newcommand\AAA{\mathbb{A}}
\newcommand\CC{\mathbb{C}}
\newcommand\QQ{\mathbb{Q}}
\newcommand\RR{\mathbb{R}}
\newcommand\ZZ{\mathbb{Z}}
\newcommand\beps{{\boldsymbol{\varepsilon}}}
\newcommand\bga{{\boldsymbol{\gamma}}}
\newcommand\bA{{\mathbf{A}}}
\newcommand\bB{{\mathbf{B}}}
\newcommand\bL{{\mathbf{L}}}
\newcommand\bx{{\mathbf{x}}}
\newcommand{\rd}{\partial}
\newcommand\fg{{\mathfrak{g}}}
\newcommand\fo{{\mathfrak{o}}}
\newcommand\fp{{\mathfrak{p}}}
\newcommand\fs{{\mathfrak{s}}}
\newcommand\fS{{\mathfrak{S}}}
\newcommand\ft{{\mathfrak{t}}}
\newcommand\eps{\varepsilon}
\newcommand\ga{\gamma}
\newcommand\Ga{\Gamma}
\newcommand\la{\lambda}
\newcommand\ka{\kappa}
\newcommand\om{\omega}
\newcommand\Om{\Omega}
\newcommand\si{\sigma}
\newcommand\Si{\Sigma}
\newcommand\su{{\mathfrak{s}\mathfrak{u}}}
\newcommand\BG{\operatorname{BG}}
\newcommand\EG{\operatorname{EG}}
\newcommand\SO{\operatorname{SO}}
\newcommand\SU{\operatorname{SU}}
\newcommand{\8}{\infty}
\newcommand\asd{{\operatorname{asd}}}
\newcommand\CCl{\operatorname{{\mathbb{C}\ell}}}
\newcommand\Char{\operatorname{Char}}
\newcommand\End{\operatorname{End}}
\newcommand\Gl{\operatorname{Gl}}
\newcommand\ind{\operatorname{Index}}
\newcommand\Imag{\operatorname{Im}}
\newcommand\Red{\operatorname{Red}}
\newcommand\SW{SW}
\newcommand\Sym{\operatorname{Sym}}
\newcommand\vol{\operatorname{vol}}
\newcommand\cl{{\mathrm{cl}}}
\newcommand\id{{\mathrm{id}}}
\newcommand\spinc{\text{$\text{spin}^c$ }}
\newcommand\spinu{\text{$\text{spin}^u$ }}
\newcommand\Spinc{\text{$\text{Spin}^c$}}
\newcommand\Spinu{\text{$\text{Spin}^u$}}
\newcommand\vir{\text{vir}}
\newcommand\sA{{\mathcal{A}}}
\newcommand\sB{{\mathcal{B}}}
\newcommand\sC{{\mathcal{C}}}
\newcommand\sG{{\mathcal{G}}}
\newcommand\sM{{\mathcal{M}}}
\newcommand\sN{{\mathcal{N}}}
\newcommand\sO{{\mathcal{O}}}
\newcommand\sV{{\mathcal{V}}}
\newcommand\sW{{\mathcal{W}}}
\newcommand\tsC{{\tilde\sC}}
\newtheorem{thm}{Theorem}[section]
\newtheorem{prop}{Proposition}[section]
\theoremstyle{definition}
\newtheorem{definition}[thm]{Definition}
\theoremstyle{remark}
\newtheorem{rmk}[thm]{Remark}
\numberwithin{equation}{section}
\begin{document}

\title[Overlap problem]
{SO(3)-monopoles: The overlap problem}

\author[Paul M. N. Feehan]{Paul M. N. Feehan}
\address{Department of Mathematics\\
        Rutgers University\\
        Piscataway, NJ 08854-8019}
\email{feehan@math.rutgers.edu}
\urladdr{http://www.math.rutgers.edu/$\sim$feehan}

\author[Thomas G. Leness]{Thomas G. Leness}
\address{Department of Mathematics\\
        Florida International University\\
        Miami, FL 33199} \email{lenesst@fiu.edu}
\urladdr{http://www.fiu.edu/$\sim$lenesst}
\subjclass{Primary 57R57; Secondary 58D27 58D29}
\thanks{PMNF was supported in part by NSF grants DMS-9704174 and DMS-0125170. TGL was supported in part by NSF grant DMS-0103677.}
\date{May 7, 2005}
\keywords{}
\begin{abstract}
The $\SO(3)$-monopole program, initiated by Pidstrigatch and Tyurin
\cite{PTLocal}, yields a relationship between the Donaldson and
Seiberg-Witten invariants through a cobordism between the moduli
spaces defining these invariants. The main technical difficulty in
this program lies in describing the links of singularities in this
cobordism arising from the Seiberg-Witten moduli subspaces. In
\cite{FL3}, we defined maps which, essentially, define normal
bundles of strata of these singularities. The link in question is
then the boundary of the union of the tubular neighborhoods
associated with these normal bundles.  However, the
$\SO(3)$-monopole program requires the computation of intersection
numbers with links where more than one stratum appears in the family
of singularities and thus more than one tubular neighborhood appears
in the definition of the link.  Computations of intersection numbers
in unions of open sets have proved difficult for even two open sets,
\cite{OzsvathBlowUp,LenessWC}. In this note, we give a brief
introduction to our article \cite{FL5}, in which we implement these
computations.
\end{abstract}
\maketitle

\section{Introduction}
Before the introduction of Seiberg-Witten invariants \cite{Witten},
the Donaldson invariants were the chief means of distinguishing
between smooth structures on four-manifolds. In \cite{Witten},
Witten not only defined Seiberg-Witten invariants, which are easier
to compute, but also conjectured a relation between the Donaldson
and Seiberg-Witten invariants. Assuming the conjecture, one see that
the Donaldson and Seiberg-Witten invariants contain the same
information about the smooth structure of a four-manifold.

In \cite{PTLocal}, Pidstrigatch and Tyurin introduced the
$\SO(3)$-monopole program to prove Witten's conjecture; see also an
account by Okonek and Teleman in \cite{OTQuaternion,OTVortex}. We
provide a description of this program in \cite{FLGeorgia}.

In \cite{FL1,FL2a,FL2b,FLLevelOne}, we proved that for an
appropriate choice of a \spinu structure $\ft$ (defined in \S
\ref{subsec:SpinuStr}), an Uhlenbeck-type compactification of the
moduli space of $\SO(3)$ monopoles, $\bar\sM_{\ft}/S^1$, defines a
smoothly-stratified cobordism between a link of a moduli space of
anti-self-dual connections and links of singularities of the form
$M_{\fs}\times\Sym^\ell(X)$ where $M_{\fs}$ is the Seiberg-Witten
moduli space associated to the \spinc structure $\fs$ (see
\cite{MorganSWNotes}).  In this note, we give an introduction to the
ideas underlying the proof of our main result in \cite{FL5}.
This result relies on a technical result, stated here as
Theorem \ref{thm:ExtendedGluingThm}, the proof of which should
be a routine extension of the results of \cite{FL3}, and which
will appear in \cite{FL4}.

\begin{thm}
\label{thm:CobordismResult} Let $X$ be a smooth, oriented manifold
with $b^1(X)=0$. Let $\Char(X)\subset H^2(X;\ZZ)$ be the set of
integral lifts of $w_2(X)$.   Then, for $h\in H_2(X;\RR)$,
$w\in H^2(X;\ZZ)$, and generator $x\in H_0(X;\ZZ)$,
\begin{equation}
\label{eq:MainThm}
D^w_X(h^{\delta-2m}x^m)
=
-\sum_{c\in \Char(X)}
\SW_X(c)g^w_{X,\delta,m,c}(h^{\delta-2m}x^m),
\end{equation}
where $D^w_X(h^{\delta-2m}x^m)$ is the Donaldson invariant of $X$,
$\SW_X(c)$ is the Seiberg-Witten invariant of the \spinc structure
$\fs$ with $c_1(\fs)=c$, and
$$
g^w_{X,\delta,m,c}:
\Sym(H_0(X;\ZZ)\oplus H_2(X;\QQ)) \to \QQ
$$
is a universal function depending only on
$\delta$, $m$, $w$, $c$ and the homotopy type of $X$.
\end{thm}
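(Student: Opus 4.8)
The plan is to run the cobordism argument at the heart of the $\SO(3)$-monopole program, the essential new ingredient being a careful analysis of the links along which several Seiberg-Witten strata meet. First I would recall from \cite{FL1,FL2a,FL2b,FLLevelOne} that, for a suitably chosen \spinu structure $\ft$, the quotient $\bar\sM_\ft/S^1$ of the Uhlenbeck compactification of the moduli space of $\SO(3)$ monopoles is a compact, smoothly-stratified space whose singular locus consists of the Donaldson moduli space $M^w_\kappa(X)$ of anti-self-dual connections (the zero-section monopoles) together with the Seiberg-Witten strata $M_\fs\times\Sym^\ell(X)$ indexed by \spinc structures $\fs$ and integers $\ell\ge 0$; the hypothesis $b^1(X)=0$ keeps these Seiberg-Witten moduli spaces and the relevant $\mu$-map under control. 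Next I would introduce, following Donaldson theory, a degree-two class $\bar\mu_p(h)$ for $h\in H_2(X;\RR)$ and a degree-four point class on the complement of the singular locus, each carried by a geometric representative, and arrange the number of factors so that cutting $\bar\sM_\ft/S^1$ down by $\delta-2m$ copies of $\bar\mu_p(h)$ and $m$ copies of the point class produces a compact one-dimensional cobordism whose boundary lies in a disjoint union of links of the singular strata.

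The core of the proof is the evaluation of these geometric representatives on each boundary link. For the link of $M^w_\kappa(X)$, I would use the description of a deleted neighborhood of the zero-section locus as an $S^1$-quotient of a sphere bundle over an open subset of $M^w_\kappa(X)$, together with the comparison between the $\SO(3)$-monopole $\mu$-classes and the Donaldson $\mu$-classes, to identify the contribution of this boundary component with a universal nonzero multiple of $D^w_X(h^{\delta-2m}x^m)$. For the link of a Seiberg-Witten stratum $M_\fs\times\Sym^\ell(X)$ I would use the gluing maps of \cite{FL3}, in the extended form of Theorem \ref{thm:ExtendedGluingThm}, to realize a neighborhood of the stratum as the quotient, by an appropriate structure group, of a bundle over $M_\fs\times\Sym^\ell(X)$ whose fibers are spaces of gluing data (cones on links of reducible $\SO(3)$ connections, together with configurations of points in $X$); the link is the boundary of this neighborhood. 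Pushing the geometric representatives forward along this fibration integrates out the fiber directions, producing a universal cohomological factor, and leaves an integral over $M_\fs$ that evaluates to $\SW_X(c)$, with $c=c_1(\fs)$, times polynomial expressions in $c\cdot h$ and $c^2$; collecting these universal constants gives the function $g^w_{X,\delta,m,c}$.

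The main obstacle, and the reason this computation is carried out separately in \cite{FL5}, is that when the closures of several Seiberg-Witten strata intersect, the link of the family of singularities is the boundary of a \emph{union} of tubular neighborhoods rather than of a single one, so evaluating the representatives requires an inclusion-exclusion over the pairwise and higher-order \emph{overlaps} of these neighborhoods --- precisely the difficulty noted already for two open sets in \cite{OzsvathBlowUp,LenessWC}. The work has two parts: giving a workable model for each overlap region (the intersection of the tubular neighborhoods attached to the normal-bundle maps of \cite{FL3}) that is simultaneously compatible with all the fibrations entering the preceding step, and verifying that the resulting correction terms still depend only on $\delta$, $m$, $w$, $c$ and the homotopy type of $X$ --- in particular that the metric- and perturbation-dependent choices made in the gluing construction cancel in the overlaps. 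Once these contributions are in hand, the cobordism invariance of the signed count of boundary points of the one-dimensional cut-down space gives $0 = D^w_X(h^{\delta-2m}x^m) + \sum_{c\in\Char(X)}\SW_X(c)\,g^w_{X,\delta,m,c}(h^{\delta-2m}x^m)$, using that $\SW_X(c)$ vanishes for all but finitely many $c$ and is zero unless $c$ is an integral lift of $w_2(X)$; rearranging is \eqref{eq:MainThm}.
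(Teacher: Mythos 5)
Your outline reproduces the paper's strategy: the cobordism $\bar\sM_{\ft}/S^1$ between the link of the anti-self-dual stratum and the links $\bL_{\ft,\fs}$ of the Seiberg--Witten strata, the evaluation of the ASD-link contribution as a universal multiple of $D^w_X(h^{\delta-2m}x^m)$, the parameterization of neighborhoods of $M_{\fs}\times\Sym^\ell(X)$ by the gluing data of \cite{FL3} in the form of Theorem \ref{thm:ExtendedGluingThm}, the pushforward along the fibration over $M_{\fs}\times\Sym^\ell(X)$ isolating $\SW_X(\fs)$ times universal constants, and the identification of the overlap problem for the union of cone-bundle neighborhoods as the crux; this is exactly how Theorem \ref{thm:CobordismResult} is reduced to the cobordism identity \eqref{eq:CobordismSum} and the link computation \eqref{eq:ReducibleLink} (Theorem \ref{conj:PTConj}).

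There is, however, one concrete gap: you cut the moduli space down only by $\delta-2m$ copies of $\barmu_p(h)$ and $m$ point classes, i.e.\ by total codimension $2\delta=\dim M^w_\ka$, whereas $\dim\sM^{*,0}_{\ft}/S^1=\dim M^w_\ka+2n_a(\ft)-2$, so the resulting cut-down space has dimension $2n_a(\ft)-1$ and is one-dimensional only in the special case $n_a(\ft)=1$; the numbers of $\mu$-factors are fixed by the monomial $h^{\delta-2m}x^m$, so there is nothing left to ``arrange.'' The missing ingredient is the second geometric representative $\bar\sW$, dual to a multiple of the first Chern class of the $S^1$ action \eqref{eq:S1Action} on $\sM^{*,0}_{\ft}$: one must intersect with $\bar\sW^{\,n_a(\ft)-1}$ as well, which both closes the dimension count and produces the $2^{n-1}$ multiple of $D^w_X$ from the ASD link in \eqref{eq:CobordismSum}, a normalization that is then absorbed into $g^w_{X,\delta,m,c}$. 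Relatedly, when you push forward over $M_{\fs}\times\Sym^\ell(X)$ you should note that the cone-bundle neighborhoods are only \emph{virtual}: the actual link is cut out of the boundary of the union by the zero locus of the obstruction section, so the fiberwise integral in \eqref{eq:DualPairing} must include the Euler class $e(\Upsilon)$ of the pseudo-vector bundle of Theorem \ref{thm:ExtendedGluingThm}, not just the classes $\barmu_p(z)$ and $\barmu_c$; with these two additions your argument matches the paper's.
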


The proof of Theorem \ref{thm:CobordismResult} proceeds as follows.
On a dense, open subspace
$$
\sM^{*,0}_{\ft}/S^1 \subset \bar\sM_{\ft}/S^1,
$$
there are geometric representatives $\bar\sV(h^{\delta-2m}x^m)$ and
$\bar\sW$ which can be thought of, following  \cite[\S
2(ii)]{KMStructure}, as representatives of homology classes. The
geometric representative $\bar\sV(h^{\delta-2m}x^m)$ is essentially
that defined in \cite[\S 2(ii)]{KMStructure}, where it is used to
compute the Donaldson invariant $D^w_X(h^{\delta-2m}x^m)$. The
geometric representative $\bar\sW$ is a representative of the
homology class Poincar\'e dual to a multiple of the first Chern
class of the $S^1$ action on $\sM^{*,0}_{\ft}$. The intersection,
$$
\sM^{*,0}_{\ft}/S^1\cap\bar\sV(h^{\delta-2m}x^m)\cap\bar\sW^{n-1},
$$
where $n=n_a(\ft)$ is a non-negative integer  defined in
equation \eqref{eq:ModuliDim}, is a collection of oriented,
one-dimensional manifolds. The cobordism given by these
one-dimensional manifolds yields an identity:
\begin{equation}
\label{eq:CobordismSum}
2^{n-1}D^w_X(h^{\delta-2m}x^m)
=
-\sum_{\fs\in\Spinc(X)}
\#\left(\bar\sV(h^{\delta-2m}x^m)\cap\bar\sW^{n-1}\cap\bL_{\ft,\fs}\right),
\end{equation}
where $\bL_{\ft,\fs}$ is  the link of $M_{\fs}\times\Sym^\ell(X)$ in
$\bar\sM_{\ft}/S^1$. Theorem \ref{thm:CobordismResult}  then follows
from a partial computation of the intersection numbers in
\eqref{eq:CobordismSum}:
\begin{equation}
\label{eq:ReducibleLink}
\#\left(\bar\sV(h^{\delta-2m}x^m)\cap\bar\sW^{n-1}\cap\bL_{\ft,\fs}\right)
=\SW_X(\fs)g^w_{X,\delta,m,c_1(\fs)}(h^{\delta-2m}x^m),
\end{equation}
where $g^w_{X,\delta,m,c_1(\fs)}$ is as defined in
Theorem \ref{thm:CobordismResult} and $c_1(\fs)$ is
the characteristic class defined in \eqref{eq:DefineChernClassOfSpinc}.
A more detailed statement of \eqref{eq:ReducibleLink} appears
in Theorem \ref{conj:PTConj}.

The proofs of Equation \eqref{eq:ReducibleLink} for $\ell=0$ and
$\ell=1$ appear in \cite{FL2b} and \cite{FLLevelOne} respectively.
Both of these proofs proceed by presenting the link $\bL_{\ft,\fs}$
as the intersection of the zero-locus of an obstruction bundle with
the boundary of a disk bundle (or an orbifold disk bundle when
$\ell=1$), showing that the intersection number is given by a
cohomological pairing, and then computing the cohomology ring of
this disk bundle. For $\ell>1$, the link $\bL_{\ft,\fs}$ is the
intersection of the zero-locus of an obstruction bundle with the
boundary of a {\em union\/} of cone bundles.  By a cone bundle, we
mean a fiber bundle whose fiber is given by a cone on a space which
need not be a sphere. The topology of these cones is sufficiently
complicated that the implied computation in Equation
\eqref{eq:ReducibleLink} can only be partial and not as explicit as
one might like. A more profound difficulty is that, as in all
Mayer-Vietoris arguments, if one wants to compute the cohomology of
a union, one must understand the intersection of the elements of
that union.  We refer to this problem as the {\em overlap
problem\/}.  Our goal in this note is to provide an exposition of
the lengthy proof \cite{FL5} of Equation \eqref{eq:ReducibleLink},
in which we solve the overlap problem.

\subsection{A model for intersection theory on stratified links}
\label{subsec:Model}
As noted above, the link $\bL_{\ft,\fs}$ is a subspace of
the boundary of a union of cone bundles and to compute the intersection number
\eqref{eq:ReducibleLink}, we must understand the intersections of
these cone bundles. A precise accounting of what information about such a
neighborhood is necessary for this computation can be given more concisely in
an abstract language which we now introduce.

Let $S\subset Y$ be a closed subspace of a smoothly stratified
space $Y$ by which we mean that $Y$ is the disjoint union of
smooth manifolds $Y_i$, which we call {\em strata\/}, where $i$ lies in a partially
ordered set satisfying $i<j$ if and only if $Y_i\subseteq
\cl(Y_j)$. We assume that $Y$ is the closure of the highest
stratum $Y_n$ and refer to $\cup_{i<n}Y_i$ as the {\em lower
strata\/}. In addition, we assume the strata of $Y$ satisfy the
{\em condition of frontier\/} that $Y_i\cap\cl(Y_j)\neq\emptyset$ only if
$Y_i\subseteq \cl(Y_j)$. We will write $S_i=S\cap Y_i$ and assume
that $S_i$ is a smooth submanifold of $Y_i$.

If $S$ were a submanifold of a smooth manifold, the standard language
for computing intersection numbers with the link of $S$ would be
to introduce a normal bundle $N\to S$ and an embedding $\bga:\sO\subset N\to Y$
where $\sO$ is a closed disk subbundle.  The link is then defined as
$\bL=\rd(\bga(\sO))$.

Neighborhoods of stratified subspaces need not be so simple. Because
stratified spaces are not manifolds, neighborhoods of subspaces
cannot necessarily be parameterized by vector bundles. Instead of
vector bundles, such neighborhoods are parameterized by cone bundles
with fiber given by a cone on a more complicated topological space.
Much of the following description applies to a wide variety of such
spaces, including Whitney or Thom-Mather stratified spaces \cite[pp.
2-4]{Verona}, \cite[\S 1.5]{GorMacPh}, \cite[\S 1.4.1]{Pflaum}.

\begin{definition}
\label{defn:LocalConeBundles}
A subspace $S$ of a stratified space $Y$ has
{\em smooth, local cone bundle neighborhoods\/} if
the following holds.
For each stratum $S_i$, there are
\begin{itemize}
\item
A neighborhood $\sO_i$ of $S_i$ in $Y$ with $\sO_i\cap\sO_j$
non-empty if and only if $i<j$ or $j<i$,
\item
A fiber bundle $\pi_i: N_i\to S_i$ with fiber $F_i$, a cone,
wherein we identify $S_i$ with the section of $\pi_i$ given
by the cone point,
\item
A homeomorphism $\bga_i$ from a neighborhood of $S_i$ in $N_i$
with $\sO_i$.
\end{itemize}
Let $t_i:\sO_i\to [0,\infty)$ be the function defined by the
composition of $\bga_i^{-1}$ and the cone parameter on $N_i$.
We will also write $\pi_i:\sO_i\to S_i$ for the map $\pi_i\circ\bga_i^{-1}$.
These maps satisfy the {\em Thom-Mather control conditions\/} if
\begin{itemize}
\item
The map $(\pi_i,t_i):\sO_i\to S_i\times [0,\infty)$ is a smooth
submersion on each stratum,
\item
For $i<j$, on $\sO_i\cap\sO_j$,
\begin{equation}
\label{eq:TMControl}
\pi_i\circ\pi_j=\pi_i,\quad
t_i\circ\pi_j=t_i.
\end{equation}
\end{itemize}
We say the control data $\{(\sO_i,\pi_i,t_i)\}$ has
{\em compatible structure groups\/} if
\begin{itemize}
\item
The structure group of each bundle $\pi_i:N_i\to S_i$
is a compact Lie group $G_i$,
\item
For $i<j$, the intersection $\sO_i\cap\sO_j$
is a $G_i$-subbundle of $N_i$ and a $G_j$-subbundle
of $N_j$,
\item
For $i<j$, on the intersection $\sO_i\cap\sO_j$, the level sets
$t_j^{-1}(\eps)$ are $G_i$-subbundles.
\end{itemize}
\end{definition}
For spaces satisfying Definition \ref{defn:LocalConeBundles},
we define the link of $S$ in $Y$ by,
$$
\bL=\rd(\cup_i \sO_i).
$$
If the intersection of $\bL$ with the lower strata has codimension
greater than or equal to two in $\bL$, then $\bL$ has a fundamental
class $[\bL]$.
The intersection number of a geometric representative or divisor $\sV$ with
$\bL$ can be represented, through a duality argument, as a
cohomological pairing:
$$
\#(\sV\cap\bL)=\langle \mu,[\bL]\rangle,
$$
with an appropriate cohomology class $\mu$. To give a partial
computation of this pairing, even when we do not know the topology
of the fibers $F_i$, we decompose $\bL$ as
\begin{equation}
\label{eq:DecomposeL} \bL=\cup_i\ \bL_i, \quad\text{where}\quad
\bL_i=\bga_i\left(t_i^{-1}(\eps_i)\right) -\cup_{j\neq
i}\bga_j\left(t_j^{-1}[0,\eps_j)\right).
\end{equation}
For generic choices of the constants $\eps_i$, the components $\bL_i$
of this decomposition
will be smoothly-stratified, closed, codimension-zero subspaces of
$\bL$ in which each stratum is a smooth manifold with corners (see
\cite[p. 7]{KirbySiebenmann} or \cite[Definition 1.2.2]{RoigDom}).
The boundary of each component $\bL_i$ can be described as:
$$
\rd\bL_i=
\cup_{j\neq i}\ \bL_i\cap\bL_j
=
\cup_{j\neq i}\ \bL_i\cap t_j^{-1}(\eps_j).
$$
Because of the control on the overlaps $\sO_i\cap\sO_j$ given by the
assumption on compatible structure groups, each component $\bL_i$
is a $G_i$ subbundle of $N_i\to S_i$ with fiber which we will write as
$F_i(\beps)$, appearing in the diagram
\begin{equation}
\label{eq:PushPull}
\begin{CD}
\bL_i @> \tilde f_i>> \EG_i\times_{G_i} F_i(\beps)
\\
@V\pi_i  VV @V p_i VV
\\
S_i @>f_i >> \BG_i,
\end{CD}
\end{equation}
Let $\mu_i$ be the restriction of $\mu$ to $\bL_i$.
Assume that $\mu_i$ is given by a product of classes pulled back from $S_i$
and of $G_i$-equivariant cohomology classes on $F_i(\beps)$.
Then, the assumption on compatible structure groups allows us
to chose a representative of $\mu$ such that the restrictions
$\mu_i$ have compact support on $\bL_i$ and are  given
by a product, $\mu_i=\pi_i^*x_i\smile\tilde f_i^*\nu_i$
where $x_i\in H^\bullet_c(S_i)$ and
$$
\nu_i\in H^\bullet(\EG_i\times_{G_i}F(\beps)))
$$
has compact vertical support with respect to $p_i$.
Then the decomposition of $\bL$ in
\eqref{eq:DecomposeL} yields the equalities
\begin{equation}
\label{eq:Decomposition}
\begin{aligned}
\langle \mu,[\bL]\rangle
{}&=
\sum_i \langle \mu,[\bL_i]\rangle\\
{}&=
\sum_i \langle( \pi_i)_*\mu_i, [S_i]\rangle \\
{}&=
\sum_i \langle( \pi_i)_*(\pi_i^*x_i\smile \tilde f_i^* \nu_i ), [S_i]\rangle
\\
{}&=
\sum_i\langle x_i\smile f_i^*((p_i)_*\nu_i),[S_i]\rangle.
\end{aligned}
\end{equation}
The final step in \eqref{eq:Decomposition} is known as a
pushforward-pullback argument (see \cite[Proposition
1.15]{TauFloer}).

This argument allows us to isolate the topology of the fibers $F_i$
in universal constants, producing the desired partial computation of
the intersection number $\#(\sV\cap\bL)$ in terms of the homotopy
class of the classifying map $f_i:S_i\to \BG_i$ without explicit
knowledge of the fibers $F_i$.

\subsection{The proof of Equation \eqref{eq:ReducibleLink} and of the
Kotschick-Morgan conjecture}
\label{subsec:ProofOfRedLink}

The proofs of Equation \eqref{eq:ReducibleLink} and the
Kotschick-Morgan Conjecture \cite{KotschickMorgan} both require a
partial computation of an intersection number with the link of a
closed subspace of a stratified space of the type described in \S
\ref{subsec:Model}. For Equation \eqref{eq:ReducibleLink}, the
stratified space is $\sM_{\ft}/S^1$ and the subspace is
$M_{\fs}\times\Sym^\ell(X)$. For the Kotschick-Morgan conjecture,
the stratified space is the Uhlenbeck compactification of a
parameterized moduli space of anti-self-dual connections, $\bar
M^w_\ka(g_I)$, parameterized by a path of metrics $g_I$, while the
subspace is $[A_0]\times\Sym^\ell(X)$ where $[A_0]$ is a reducible,
anti-self-dual connection.

The strata of the closed subspaces, $M_{\fs}\times\Sym^\ell(X)$ and
$[A_0]\times\Sym^\ell(X)$, are given by $M_{\fs}\times\Si$ and
$[A_0]\times\Si$, respectively, where $\Si\subset\Sym^\ell(X)$ is a
smooth stratum.

The gluing theorems of \cite{FL3} do not quite yield the cone bundle
neighborhoods of $M_{\fs}\times\Si$ in $\bar\sM_{\ft}/S^1$ required
in Definition \ref{defn:LocalConeBundles}. Rather they provide a
{\em virtual cone bundle neighborhood\/} by which we mean:
\begin{itemize}
\item
A cone bundle $\Gl(\ft,\fs,\Si)\to M_{\fs}\times\Si$,
\item
An obstruction section, $\fo_\Si$, of a pseudo-vector bundle
$\Upsilon_{\ft,\fs}\to\Gl(\ft,\fs,\Si)$,
\item
A homeomorphism between
$\fo_\Si^{-1}(0)$ and a neighborhood of $M_{\fs}\times\Si$ in
$\bar\sM_{\ft}/S^1$.
\end{itemize}
In \cite{FL5}, we define $\bL^{\vir}_{\ft,\fs}$ as the boundary
of the union of cone bundles.
The actual link $\bL_{\ft,\fs}$ is the intersection of $\bL^{\vir}_{\ft,\fs}$
with the zero locus of the obstruction sections.
The intersection number
in \eqref{eq:ReducibleLink} is then equal to
$$
\langle \mu\smile e(\Upsilon_{\ft,\fs}),[\bL^{\vir}_{\ft,\fs}]\rangle,
$$
where $e(\Upsilon_{\ft,\fs})$ acts as an Euler class of the
pseudo-vector bundle and $\mu$ is a cohomology class dual to the
geometric representatives appearing in \eqref{eq:ReducibleLink}. In
\cite{FL5} we show that the virtual cone bundle neighborhoods can be
deformed, in a sense to be defined in \S
\ref{subsec:DeformingTheFiber}, to satisfy the Thom-Mather control
condition and the compatible structure group condition of Definition
\ref{defn:LocalConeBundles}. Then, the arguments of \S
\ref{subsec:Model} can be applied to compute the cohomology pairing
above and thus the intersection number in Equation
\eqref{eq:ReducibleLink}.

The methods of \cite{FL5} also apply to the Kotschick-Morgan conjecture.
The gluing maps of Taubes, \cite{TauIndef,FLKM1}, give the smooth,
local cone
bundle neighborhoods of $[A_0]\times \Si$ in $\bar M^w_\ka(g_I)$.
The constructions of \cite{FL5}, while
couched in the language of $\SO(3)$ monopoles, easily
translate to the language of anti-self-dual connections and
show that these cone bundle neighborhoods can also be deformed so that
they satisfy the Thom-Mather control condition and
the compatible structure group condition of Definition
\ref{defn:LocalConeBundles}.  The pushforward-pullback
argument described in \S \ref{subsec:Model} would then yield a
proof of the Kotschick-Morgan conjecture.

While the existence of cone bundle neighborhoods for
$[A_0]\times\Si$ and the virtual cone bundle neighborhoods for
$M_{\fs}\times\Si$ has been known since \cite{TauIndef,FLKM1,FL3},
the result in \cite{FL5} that these neighborhoods can be deformed to
satisfy the Thom-Mather control condition and the compatible
structure group condition is new. The authors believe that any
attempt to prove Equation \eqref{eq:ReducibleLink} or the
Kotschick-Morgan conjecture must solve these issues.

As an example, in \cite{BohuiChen}, Chen attempts to prove the
Kotschick-Morgan conjecture by constructing a bubbletree resolution
of the Uhlenbeck compactification $\bar M^w_\ka(g_I)$ and applying
equivariant localization arguments to this resolution. To prove that
this resolution is a $C^1$ orbifold, Chen tries to show that, for
gluing maps, $\bga_\Si$ and $\bga_{\Si'}$, parameterizing
neighborhoods of different strata, the transition map
$\bga_{\Si}^{-1}\circ\bga_{\Si'}$ is smooth. At first glance, this
might appear to be a weaker result than the above requirements on
Thom-Mather control conditions and compatible structure groups.
However, Chen compares the gluing maps $\bga_\Si$ and $\bga_{\Si'}$
by introducing an artificial transition map, $\Phi_{\Si,\Si'}$,
between the domains of the two gluing maps and defining an
isotopy between $\bga_\Si$ and $\bga_{\Si'}\circ\Phi_{\Si,\Si'}$.
This artificial transition map, similar to that introduced in
\cite{KotschickMorgan}, has the properties necessary to show
Definition \ref{defn:LocalConeBundles} holds.
Thus, a complete
implementation of the method of \cite{BohuiChen} would yield
essentially the same program as that of \cite{FL5}, with the
additional complication of having to work with the extra
data of the bubbletree compactification.

The method of comparing a gluing map $\bga_\Si$ with a composition
$\bga_{\Si'}\circ\Phi_{\Si,\Si'}$ by constructing an isotopy between
the two also appears in \cite[\S 4.5.1]{LenessWC}. This is a very
natural method to try because a direct comparison of the gluing maps
$\bga_\Si$ and $\bga_{\Si'}$ appears impractical for reasons
discussed in the beginning of \S \ref{sec:Overlaps}. The comparison
becomes significantly more difficult when there are more than two
open sets, for rather than constructing a single isotopy between two
maps one must now construct a family of diffeomorphisms
parameterized by a higher-dimensional simplex with the maps
$\bga_\Si$ and $\bga_{\Si_i}\circ\Phi_{\Si,\Si_i}$ at the vertices,
in a manner similar to the development of a $k$-isotopy \cite[p.
182]{Hirsch}. The authors believe that the inductive constructions
described in \S \ref{subsec:DeformingTheFiber} could be used to give
such a $k$-isotopy.

\subsection{From Theorem \ref{conj:PTConj} to the Witten Conjecture and other applications}
Equation \eqref{eq:MainThm} shows that the Donaldson invariants are
determined by the Seiberg-Witten invariants and the homotopy type of
$X$, but the relation does not immediately yield Witten's formula.
We describe how Witten's relation between the Donaldson and
Seiberg-Witten invariants follows from Theorem \ref{conj:PTConj} in
a separate article \cite{FLWConjecture}.

As noted in the preceding section, the proof of Equation
\eqref{eq:ReducibleLink} can be adapted to give a proof of the
Kotschick-Morgan conjecture.

It is also worth noting that Kronheimer and Mrowka's proof
\cite{KMPropertyP} of Property P relies on Equation
\eqref{eq:ReducibleLink}.

\subsection{Organization}
This article comprises the following sections. In \S
\ref{sec:Prelim}, we review the basic definitions and ideas of the
$\SO(3)$-monopole program and describe the intersection numbers in
Equation \eqref{eq:ReducibleLink}. In \S \ref{sec:GluingMaps}, we
describe the gluing maps we use to parameterize the neighborhoods
containing these intersection numbers. In \S \ref{sec:Overlaps}, we
summarize the proofs in \cite{FL5} of the control properties of the
gluing map overlaps. Finally, in \S
\ref{sec:CohomolFormalism}, we use this understanding of the
overlaps of the gluing maps to introduce a cohomological formalism
to compute the desired intersection numbers.

{\em Acknowledgements:} This note is an expansion of the second
author's talk at the Fields Institute Conference on Geometry and
Topology of Manifolds in May of 2004.  We are grateful to the
conference organizers for inviting us.

\section{Preliminaries}
\label{sec:Prelim} Throughout this article, let $X$ be a smooth,
closed, and oriented four-manifold.  We will assume that $b_1(X)=0$
and $b^+(X)\ge 2$ for the sake of simplicity, although much of what
we discuss here holds for more general $b^1(X)$ and for $b^+(X)=1$.
The material reviewed in this section appears in full detail in
\cite{FL2a,FL2b,FeehanGenericMetric}.

\subsection{\Spinu structures}
\label{subsec:SpinuStr}
A Clifford module  for $T^*X$ is defined by a complex vector
bundle $V\to X$ and a Clifford multiplication map, $\rho:T^*X\to
\End_\CC(V)$ which is a real-linear bundle map satisfying
\begin{equation}
\label{eq:CliffordMapDefn} \rho(\alpha)^2 =
-g(\alpha,\alpha)\id_{V} \quad\text{and}\quad \rho(\alpha)^\dagger
= -\rho(\alpha), \quad \alpha \in C^\8(T^*X).
\end{equation}
The map $\rho$ uniquely extends to a linear isomorphism,
$\rho:\Lambda^{\bullet}(T^*X)\otimes_\RR\CC\to\End_\CC(V)$, and
gives each fiber $V_x$ the structure of a Hermitian Clifford module
for the complex Clifford algebra $\CCl(T^*X)|_x$, for all $x\in X$.
There is a splitting $V=V^+\oplus V^-$, where $V^\pm$ are the $\mp
1$ eigenspaces of $\rho(\vol)$.

A {\em \spinc structure\/} is then a Clifford module for $T^*X$,
$\fs=(\rho,W)$, where $W$ has complex rank four; it defines a
class
\begin{equation}
\label{eq:DefineChernClassOfSpinc}
c_1(\fs) = c_1(W^+) \in
H^2(X;\ZZ).
\end{equation}
We call a Clifford module for $T^*X$, consisting of a pair $\ft
\equiv (\rho,V)$,  a {\em \spinu structure\/} when $V$ has complex
rank eight.  If $\fs=(\rho_W,W)$ is a \spinc structure, then for any
\spinu structure, $\ft=(\rho,V)$, there is a complex, rank-two
vector bundle $E\to X$ with $(\rho,V)=(\rho_W\otimes\id_E,W\otimes
E)$ \cite[Lemma 2.3]{FL2a}.

A \spinu structure, $\ft=(\rho,V)$, defines some auxiliary bundles
over $X$. Recall that $\fg_{V}\subset\su(V)$ is the $\SO(3)$
subbundle given by the span of the sections of the bundle $\su(V)$
which commute with the action of $\CCl(T^*X)$ on $V$. The fibers
$V_x^+$ define complex lines whose tensor-product square is
$\det(V^+_x)$ and thus a complex line bundle over $X$,
\begin{equation}
\label{eq:CliffordDeterminantBundle} {\det}^{\frac{1}{2}}(V^+).
\end{equation}
A \spinu structure $\ft$ thus defines classes,
\begin{equation}
\label{eq:SpinUCharacteristics} c_1(\ft)=\textstyle{\frac{1}{2}}
c_1(V^+), \quad p_1(\ft) = p_1(\fg_{V}), \quad \text{and}\quad
w_2(\ft)=w_2(\fg_{V}).
\end{equation}
If $\fs=(\rho,W)$ is a \spinc structure and $V\cong W\otimes_\CC
E$, then
$$
\fg_V = \su(E) \quad\text{and}\quad {\det}^{\frac{1}{2}}(V^+) =
\det(W^+)\otimes_\CC\det(E).
$$

\subsection{The equations}
A unitary connection $A$ on $V$ is {\em spin \/} if
\begin{equation}
\label{eq:SpinConnection} [\nabla_A,\rho(\alpha)]
=\rho(\nabla\alpha) \quad\text{on }C^\8(V),
\end{equation}
for any $\alpha\in C^\8(T^*X)$, where $\nabla$ is the Levi-Civita
connection. We will write $\sA_{\ft}$ for the space of spin
connections on $V$ which induce a fixed connection on $\det(V)$.
There is a bijection between $\sA_\ft$ and the space of orthogonal
connections on $\fg_V$.

We will write $\sG_{\ft}$ for the space of gauge transformations of
$V$ which commute with Clifford multiplication and which have
Clifford-determinant equal to one \cite[Definition 2.6]{FL2a}. If
$V=W\otimes E$ as above, then $\sG_{\ft}$ is the set of gauge
transformations of $V$ induced by the special-unitary gauge
transformations of $E$.

We will consider pairs in the Sobolev completions of the space
$$
\tsC_{\ft}=\sA_{\ft}\times\Om^0(V^+),
$$
and points in the quotient,
$$
\sC_{\ft}=\tsC_{\ft}/\sG_{\ft}.
$$
For $(A,\Phi)\in\tsC_{\ft}$, the $\SO(3)$-monopole equations are
then:
\begin{equation}
\begin{aligned}
\rho(F_A^+)_0&=(\Phi\otimes\Phi^*)_{00},
\\
D_A\Phi&=0.
\end{aligned}
\label{eq:SO3}
\end{equation}
Here, $D_A:\Om^0(V^+)\to\Om^0(V^-)$ is the Dirac operator associated
to the connection $A$, while $(\Phi\otimes\Phi^*)_{00}$ denotes the
doubly trace-free component of $\Phi\otimes\Phi^*$ (see \cite{FL2a}
for details). The moduli space of $\SO(3)$ monopoles on $\ft$ is
$$
\sM_{\ft} = \{[A,\Phi]\in\sC_{\ft}: \text{\eqref{eq:SO3} holds}\}.
$$
We write $\sM^0_{\ft}$ for the subspace of $\sM_{\ft}$ where the
section $\Phi$ is not identically zero. We let $\sM^*_{\ft}$ denote
the set of pairs $[A,\Phi]\in\sM_{\ft}$ where $A$ does not admit a
reduction as $A=A_1\oplus A_2$ with respect to a splitting
$V=(W\otimes L_1)\oplus (W\otimes L_2)$ for line bundles $L_1$ and
$L_2$. Then, for generic perturbations \cite{FeehanGenericMetric,
TelemanGenericMetric}, the space
$$
\sM^{*,0}_{\ft}=\sM^*_{\ft}\cap\sM^0_{\ft}
$$
is a smooth manifold.

\subsection{The singularities}
The $S^1$ action,
\begin{equation}
\label{eq:S1Action}
\left( e^{i\theta},[A,\Phi]\right) \mapsto
[A,e^{i\theta}\Phi]
\end{equation}
has stabilizer $\{\pm 1\}$ on $\sM^{*,0}_{\ft}$.
There are two
types of fixed points for this action. To describe these, it helps
to assume that $V=W\otimes E$ where $\fs=(\rho,W)$ is a \spinc
structure.

The first type of fixed point occurs when the section $\Phi$ is
identically zero. The subspace of such pairs is diffeomorphic to
the moduli space of anti-self-dual connections on the $\SO(3)$
bundle $\fg_V=\su(E)$:
$$
\sM_{\ft}-\sM^0_{\ft}=M^w_\ka,
$$
where $M^w_\ka$ is the moduli space of anti-self-dual connections on
$\fg_V$, $w=c_1(E)$, and $\ka=p_1(\ft)$ \cite{FL2a}.

The second type of fixed point occurs when the connection $A$ is
reducible in the sense that it can be written as $A=A_1\oplus A_2$
with respect to a splitting
$$
V\simeq W\otimes E\simeq  (W\otimes L_1)\oplus (W\otimes L_2).
$$
The subspace of such fixed points is diffeomorphic to a
perturbation of the Seiberg-Witten moduli space associated to the
\spinc structure $\fs\otimes L_i=(\rho,W\otimes L_i)$ (either
$i=1$ or $i=2$) as discussed in \cite{FL2a}.  These \spinc
structures lie in the set
$$
\Red(\ft)= \{\fs\in\Spinc(X): \left(
c_1(\fs)-c_1(\ft)\right)^2=p_1(\ft)\}.
$$
If $M_{\fs}$ is the Seiberg-Witten moduli space associated to the
\spinc structure $\fs$, then
$$
\sM_{\ft}-\sM^*_{\ft}=\cup_{\fs\in\Red(\ft)} M_{\fs}.
$$
The moduli space $\sM_{\ft}/S^1$ then gives a cobordism between the
links, $\bL^{\asd}_{\ft}$ of $M^w_\ka$ in $\sM_{\ft}$ and $
\bL_{\ft,\fs}$ of $M_{\fs}$ in $\sM_{\ft}$. The dimension of this
cobordism is,
\begin{equation}
\label{eq:ModuliDim1}
\dim \sM^{*,0}_{\ft}/S^1
=\dim M^w_\ka +2n_a(\ft)-2,
\end{equation}
where
\begin{equation}
\label{eq:ModuliDim}
n_a(\ft)=\ind_\CC D_A=\frac{1}{4}\left(p_1(\ft)+c_1(\ft)^2-\si(X)\right).
\end{equation}
Unfortunately, the cobordism  $\sM_{\ft}/S^1$ is not compact.

\subsection{The compactification}
The moduli space $\sM_{\ft}$ admits a compactification similar to
the Uhlenbeck compactification of the moduli space of anti-self-dual
connections. Let $\ft(\ell)$ be the \spinu structure with
$c_1(\ft(\ell))=c_1(\ft)$ and $p_1(\ft(\ell))=p_1(\ft)+4\ell$. Thus,
if we can write $V=W\otimes E$, where $\fs=(\rho,W)$ is a \spinc
structure, then $\ft(\ell)=(\rho,W\otimes E_\ell)$ where
$c_1(E_\ell)=c_1(E)$ and $c_2(E_\ell)=c_2(E)-\ell$. Let
$\Sym^\ell(X)$ denote the $\ell$-th symmetric product of $X$,
$\Sym^\ell(X)=X^\ell/\fS_\ell$. Define
$$
I\sM_{\ft}=\sM_{\ft}\cup \left(\cup_{\ell=1}^N
\sM_{\ft(\ell)}\times\Sym^\ell(X)\right),
$$
and give this space the topology induced by Uhlenbeck convergence of
sequences. That is, a sequence $\{[A_\alpha,\Phi_\alpha]\}$
converges to a point $([A_0,\Phi_0],\bx)$ where $\bx\in\Sym^\ell(X)$
if, after gauge transformations, the restrictions of
$(A_\alpha,\Phi_\alpha)$ to compact subsets of $X-\bx$ converge to
the same restriction of $(A_0,\Phi_0)$ in the smooth topology. In
addition, the sequence of measures defined by $|F_{A_\alpha}|^2$
must converge, in the weak star topology, to that defined by
$|F_{A_0}|^2$ added to a multiple of the Dirac delta measure
supported at $\bx$.

Define $\bar\sM_{\ft}$ to be the closure of $\sM_{\ft}$ in
$I\sM_{\ft}$ with the topology described above.

The dimension formula in Equation \eqref{eq:ModuliDim1} implies that
\begin{equation}
\label{eq:LowerStrataDimen} \dim\left(
\sM^{*,0}_{\ft(\ell)}\times\Sym^\ell(X)\right) =
\dim\sM^{*,0}_{\ft}-2\ell,
\end{equation}
which will be useful in dimension-counting arguments.

We will also use the space
\begin{equation}
\label{eq:ConfigComplete}
\bar\sC_{\ft} = \sC_{\ft}\cup
\left(\cup_{\ell=1}^N \sC_{\ft(\ell)}\times\Sym^\ell(X)\right),
\end{equation}
with the same definition of convergence.

The $S^1$ action \eqref{eq:S1Action} extends continuously to the
compactification $\bar\sM_{\ft}$ with similar fixed point sets. That
is, the fixed point subspaces again divide into two types, those
where the section vanishes and those where the connection is
reducible.  The subspace of $\bar\sM_{\ft}$ where the section
vanishes is given by $\bar M^w_\ka$, the Uhlenbeck compactification
of the moduli space of anti-self-dual connections on $\fg_V$.

The fixed point subspaces where the connection is reducible are more
complicated when they lie in the compactification. A level-$\ell$
reducible is of the form
$$
M_{\fs}\times\Sym^\ell(X) \subset
\sM_{\ft(\ell)}\times\Sym^\ell(X),
$$
where $M_{\fs}\subset\sM_{\ft(\ell)}$ and thus
$(c_1(\fs)-c_1(\ft))^2=p_1(t)+4\ell$.  We then define
\begin{align*}
\Red_\ell(\ft) {}&= \{\fs\in\Spinc(X):
(c_1(\fs)-c_1(\ft))^2=p_1(\ft)+4\ell\},
\\
\bar{\Red}(\ft) {}&= \cup_{\ell\ge 0} \Red_\ell(\ft).
\end{align*}
The space $\bar\sM_{\ft}/S^1$ defines a smoothly-stratified
cobordism between the link of $\bar M^w_\ka$ in $\bar\sM_{\ft}/S^1$
and the links of $M_{\fs}\times\Sym^\ell(X)$ in $\bar\sM_{\ft}/S^1$,
where $\fs\in\bar{\Red}(\ft)$.

\subsection{The cobordism and cohomology classes}
Let $\bar \bL^{\asd}_{\ft}$ be the link of the Uhlenbeck
compactification of the moduli space of anti-self-dual
connections, $\bar M^w_\ka$, in $\bar\sM_{\ft}/S^1$. For
$\fs\in\bar{\Red}_\ell(\ft)$, let $\bL_{\ft,\fs}$ be the link
of $M_{\fs}\times\Sym^\ell(X)$ in $\bar\sM_{\ft}/S^1$.

Following \cite{KMStructure} and recalling that we assumed
$b_1(X)=0$, define
$$
\AAA(X)=\Sym^\bullet(H_2(X);\RR)\oplus H_0(X;\RR)),
$$
and  assign to $\beta\in H_\bullet(X)$ the degree
$\deg(\beta)=4-\bullet$ in $\AAA(X)$. For a monomial $z$ in
$\AAA(X)$, a geometric representative $\bar\sV(z)$ was constructed
in \cite{KMStructure} whose intersection number with $\bar M^w_\ka$
defined the Donaldson invariant. In \cite{FL2a,FL2b}, geometric
representatives extending $\bar\sV(z)$ from $\bar M^w_\ka$ to
$\bar\sM^*_{\ft}/S^1$ were defined. In addition, a geometric
representative $\bar\sW$ was defined on $\bar\sM^{*,0}_{\ft}/S^1$
which is dual to a multiple of the first Chern class of the $S^1$
action.

For  $\deg(z)=\dim M^w_\ka$, we computed the following intersection
number in \cite{FL2b},
$$
2^{n-1} D^w_X(z)=\#\left( \bar\sV(z)\cap\bar\sW^{n-1}\cap
\bar\bL^{\asd}_{\ft}\right),
$$
where $n=n_a(\ft)$. The geometric representatives intersect the
lower strata of $\bar\sM_{\ft}/S^1$ transversely away from the fixed
point sets of the $S^1$ action.  Hence, the dimension formula
\eqref{eq:LowerStrataDimen} shows that the cobordism provided by
$\bar\sM_{\ft}/S^1$ and the preceding identity yield the following
expression for the Donaldson invariant:
\begin{equation}
\label{eq:CobordismSum1}
2^{n-1} D^w_X(z) =
-\sum_{\fs\in\Spinc(X)} \#\left(
\bar\sV(z)\cap\bar\sW^{n-1}\cap\bL_{\ft,\fs} \right),
\end{equation}
where we define the link $\bL_{\ft,\fs}$ to be empty if
$\fs\notin\bar{\Red}(\ft)$.
The computation of the intersection
number
\begin{equation}
\label{eq:ReducibleIntersectionNumber} \#\left(
\bar\sV(z)\cap\bar\sW^{n-1}\cap\bL_{\ft,\fs} \right)
\end{equation}
appears in \cite{FL2b} for $\fs\in\Red(\ft)$ and appears in
\cite{FLLevelOne} for $\fs\in\Red_1(\ft)$. For
$\fs\in\Red_\ell(\ft)$ with $\ell\ge 2$, the computation becomes
significantly more difficult, although the development in
\cite{LenessWC} suggests an approach to the case $\ell=2$. In
\cite{FL5}, we show how the following version of Equation
\eqref{eq:ReducibleLink} follows from the technical result
Theorem  \ref{thm:ExtendedGluingThm}.

\begin{thm}
\label{conj:PTConj} Assume the result of Theorem
\ref{thm:ExtendedGluingThm}. If $b_1(X)=0$, if $\ft$ is a \spinu
structure on $X$ and $\fs\in \Spinc(X)$ with
$$
M_{\fs}\times\Sym^\ell(X) \subset I\sM_{\ft},
$$
then
\begin{align*}
{}&\# \left( \bar\sV(h^{\delta-2m}x^m)\cap \bar\sW^{n-1} \cap
\bL_{\ft,\fs}\right)
\\
\qquad &= \SW_X(\fs) \sum_{i=0}^k p_{\delta,\ell,m,i}(A,B)Q_X^i(h),
\end{align*}
where $k=\min[\ell,(\delta-2m)/2)]$,
\begin{align*}
A&=\langle c_1(\fs)-c_1(\ft),h\rangle,
\\
B&=\langle c_1(\ft),h\rangle,
\end{align*}
and $p_{\delta,\ell,m,i}$ is a homogeneous polynomial of degree
$(\delta-2m-2i)$ whose coefficients are universal functions of
$$
\chi(X),\ \si(X),\ c_1(\fs)^2,\
c_1(\ft)^2,\ c_1(\ft)\cdot c_1(\fs),\ p_1(\ft),\
m,\ \delta,\ \text{and}\ \ell.
$$
\end{thm}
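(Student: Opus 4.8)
The plan is to combine the cobordism identity \eqref{eq:CobordismSum1} with the decomposition formalism of \S\ref{subsec:Model}, the virtual cone bundle neighborhoods supplied by Theorem \ref{thm:ExtendedGluingThm}, and a pushforward--pullback argument over each stratum $M_{\fs}\times\Si$ of $M_{\fs}\times\Sym^\ell(X)$. First I would recall from the discussion after Definition \ref{defn:LocalConeBundles} that the intersection number \eqref{eq:ReducibleIntersectionNumber} equals the cohomological pairing $\langle \mu\smile e(\Upsilon_{\ft,\fs}),[\bL^{\vir}_{\ft,\fs}]\rangle$, where $\mu$ is the cohomology class dual to $\bar\sV(h^{\delta-2m}x^m)\cap\bar\sW^{n-1}$, and invoke the deformation result of \cite{FL5} (couched on the technical input Theorem \ref{thm:ExtendedGluingThm}) which arranges the virtual cone bundle neighborhoods so that the Thom--Mather control condition and the compatible-structure-group condition hold. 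Then \eqref{eq:DecomposeL} decomposes $\bL^{\vir}_{\ft,\fs}=\cup_\Si \bL^{\vir}_{\ft,\fs,\Si}$ indexed by the smooth strata $\Si\subset\Sym^\ell(X)$, with $\bL^{\vir}_{\ft,\fs,\Si}$ a $G_\Si$-subbundle of $\Gl(\ft,\fs,\Si)\to M_{\fs}\times\Si$, and \eqref{eq:Decomposition} reduces the pairing to a sum $\sum_\Si \langle x_\Si\smile f_\Si^*((p_\Si)_*\nu_\Si),[M_{\fs}\times\Si]\rangle$ of pairings pushed forward to the base.

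Next I would identify the factors in each summand. The base splits as $M_{\fs}\times\Si$; the class $x_\Si$ restricted to the $M_{\fs}$-factor integrates against $[M_{\fs}]$ to produce the Seiberg--Witten invariant $\SW_X(\fs)$ (here one uses that $M_{\fs}$ is cut out transversely, after generic perturbation, with the appropriate orientation, and that $b_1(X)=0$ so $M_{\fs}$ is a finite set of signed points when it contributes). The remaining factor of $x_\Si$ lives on $\Si\subset\Sym^\ell(X)$ and, together with the fiber-integrated class $(p_\Si)_*\nu_\Si$ pulled back via the classifying map $f_\Si:\Si\to\BG_\Si$, is an $X$-universal cohomology class on $\Sym^\ell(X)$: its pairing against $[\bar\Si]$ depends only on the homotopy data entering $f_\Si$ and the equivariant fiber topology. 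The monomials $h^{\delta-2m}$ and $\bar\sW^{n-1}$ enter through the geometric representatives $\bar\sV$ and $\bar\sW$; restricted to the link over $\Si$, the $\bar\sV(h)$-factors contribute either a power of the intersection form $Q_X(h)$ (when both "legs" of $h$ land on the $\Sym^\ell(X)$ direction, i.e. on the bubble points) or a linear term in $\langle c_1(\fs)-c_1(\ft),h\rangle=A$ and $\langle c_1(\ft),h\rangle=B$ (when a $h$-leg lands on the $M_{\fs}$ or $E$ direction), exactly as in the $\ell=0,1$ computations of \cite{FL2b,FLLevelOne}. Collecting, the $h$-dependence of the $\Si$-summand is a monomial $A^{a}B^{b}Q_X(h)^i$ with $a+b+2i=\delta-2m$, and the coefficient is a universal number built from $\chi(X)$, $\si(X)$, $c_1(\fs)^2$, $c_1(\ft)^2$, $c_1(\ft)\cdot c_1(\fs)$, $p_1(\ft)$, $m$, $\delta$, $\ell$ (these are the characteristic numbers that the classifying maps $f_\Si$, the obstruction bundle $\Upsilon_{\ft,\fs}$, and the dimensions of $M_{\fs}$ and of $\Gl(\ft,\fs,\Si)$ depend on). Summing over $\Si$ and grouping by the power $i$ of $Q_X(h)$ yields $\SW_X(\fs)\sum_{i=0}^k p_{\delta,\ell,m,i}(A,B)Q_X^i(h)$, with $p_{\delta,\ell,m,i}$ homogeneous of degree $\delta-2m-2i$ in $A,B$; the range $k=\min[\ell,\lfloor(\delta-2m)/2\rfloor]$ is forced because at most $2\ell$ of the $h$-factors can pair with the $\ell$-fold symmetric product direction and at most $\delta-2m$ $h$-factors are present.

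The main obstacle is the \emph{overlap problem} itself, which is precisely what forces the computation to be only partial: when $\ell>1$ several strata $\Si$ appear and the corresponding cone bundle neighborhoods genuinely intersect, so the decomposition \eqref{eq:DecomposeL} is only as good as our control over $\sO_i\cap\sO_j$. Concretely, the hard step is not the pushforward--pullback bookkeeping above but the verification that the virtual cone bundle neighborhoods can be \emph{simultaneously} deformed — compatibly on all overlaps at once — so that \eqref{eq:TMControl} and the compatible-structure-group conditions hold; this is the content of the inductive construction summarized in \S\ref{subsec:DeformingTheFiber} and carried out in \cite{FL5}, and it is where the topology of the cone fibers $F_i$ (which we do not compute) is isolated into the universal constants. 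A secondary technical point is that $\Upsilon_{\ft,\fs}$ is only a \emph{pseudo}-vector bundle, so $e(\Upsilon_{\ft,\fs})$ must be interpreted with care and shown to be compatible with the restriction to each $\bL^{\vir}_{\ft,\fs,\Si}$ and with taking compactly-supported representatives of $\mu$; granting Theorem \ref{thm:ExtendedGluingThm}, this is handled by the gluing-map apparatus of \S\ref{sec:GluingMaps}. Once these control properties are in hand, the formula follows by assembling the per-stratum contributions as above.
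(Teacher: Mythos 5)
Your overall route coincides with the paper's: duality converts the intersection number into the pairing \eqref{eq:DualPairing}, the boundary $\rd\Gl(\ft,\fs,X)/S^1$ is decomposed by strata as in \eqref{eq:DecomposeL}, and a pushforward--pullback argument over each $M_{\fs}\times\Si$ isolates the unknown fiber topology into universal constants, with the Thom--Mather and compatible-structure-group properties (Theorem \ref{thm:ConesControlled}) and Theorem \ref{thm:ExtendedGluingThm} doing the heavy lifting. You also correctly flag the overlap problem and the pseudo-vector-bundle Euler class as the serious issues, and your account of how $A$, $B$, $Q_X(h)$ and the bound $k$ arise is consistent with the paper, which leaves that bookkeeping at a comparable level of detail.

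The genuine gap is at the step where you ``reduce the pairing to a sum $\sum_\Si\langle x_\Si\smile f_\Si^*((p_\Si)_*\nu_\Si),[M_{\fs}\times\Si]\rangle$.'' You are invoking \eqref{eq:Decomposition}, but that formula rests on a hypothesis that is merely \emph{assumed} in the abstract model of \S\ref{subsec:Model}: that one can choose a representative of the class which has compact support on each piece away from its corners and which restricts there to a product of a class pulled back from the base and an equivariant class on the fiber. For $\Om(z,n)=e(\Upsilon)\smile\barmu_p(z)\smile\barmu_c^{n-1}$ this is not automatic: the pieces $\bL^{\vir}_{\ft,\fs}(\Si_i)$ are manifolds with corners fibered over $M_{\fs}\times K_i$ with $K_i\Subset\Si_i$ only a precompact subset of a noncompact stratum, so neither the fundamental classes you pair against nor the compact support of $x_\Si$ is in place as stated. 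The paper supplies exactly this missing step in Proposition \ref{prop:QuotientProperties}: it constructs a quotient $q:\rd\Gl(\ft,\fs,X)/S^1\to\widehat\bL^{\vir}_{\ft,\fs}$ collapsing the boundaries \eqref{eq:DefineLinkPieceBoundary} so that their images have codimension at least two (whence \eqref{eq:DecomposeQuotientFund}), the bundle pieces extend over the compact bases $M_{\fs}\times\cl(\Si_i)$, and $\Om(z,n)=q^*\widehat\Om(z,n)$ with the product decomposition \eqref{eq:QuotientCohom}; only after this does the chain of equalities \eqref{eq:PairingComp} --- your per-stratum computation --- actually go through. Without this quotient (or an equivalent device producing compactly supported, product-form representatives and well-defined per-stratum fundamental classes), the sum over strata in your second paragraph is not yet a meaningful identity.
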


\begin{rmk}
A similar result holds when $b^1(X)>0$, but we omit it for
simplicity.
\end{rmk}

\section{The gluing maps}
\label{sec:GluingMaps}
To prove Theorem \ref{conj:PTConj}, we show
that for each smooth stratum $\Si$ of $\Sym^\ell(X)$, the subspaces
$M_{\fs}\times\Si$ of $\bar\sM_{\ft}/S^1$ admit  virtual local cone
bundle neighborhoods in the sense of \S \ref{subsec:ProofOfRedLink}
which satisfy Definition \ref{defn:LocalConeBundles}. The gluing
theorems of \cite{FL3} provide these neighborhoods by defining a
``gluing map'' which parameterizes a neighborhood of
$M_{\fs}\times\Si$ in $\bar\sM_{\ft}$.

A gluing map is a composition of a splicing map and a gluing
perturbation which we describe in the following sections.

\subsection{Splicing maps}
\label{subsec:SplicingMap}
The splicing map creates an approximate
solution to the $\SO(3)$-monopole equations by attaching solutions
in $\sM_{\ft(\ell)}$ to concentrated solutions on $S^4$ in a kind of
connected sum construction. The domain of the splicing map contains
the following data:
\begin{enumerate}
\item A `background pair' $(A_0,\Phi_0)\in\sM_{\ft(\ell)}$, \item
Splicing points,  $\bx\in \Si$, \item Solutions of the equations
on $S^4$.
\end{enumerate}
We now describe the spaces in which these data live.

\subsubsection{The background pair}
In \cite{FL2a}, a neighborhood of $M_{\fs}$ in $\sM_{\ft(\ell)}$
was described using virtual neighborhood techniques.  That is, we
defined
\begin{enumerate}
\item A pair of vector bundles, $N_{\ft(\ell),\fs}\to M_{\fs}$ and
$\Upsilon_{\fs}\to M_{\fs}$, \item An $S^1$-equivariant embedding
$\ga_{\fs}:N_{\ft(\ell),\fs}\to \sC_{\ft(\ell)}$ which is the
identity on $M_{\fs}$, \item A section $\fo_{\fs}$ of the pullback
of $\Upsilon_{\fs}$ to $N_{\ft(\ell),\fs}$
\end{enumerate}
such that the restriction of $\bga_{\fs}$ to $\fo_{\fs}^{-1}(0)$
defines an $S^1$-equivariant homeomorphism from
$\fo_{\fs}^{-1}(0)$ onto a neighborhood of $M_{\fs}$ in
$\sM_{\ft(\ell)}$.

\subsubsection{The splicing points}
Let $\Si\subset\Sym^\ell(X)$ be a stratum given by the partition
$\ka_1+\dots+\ka_r=\ell$. A point $\bx\in\Si$ can be described by
an unordered collection of distinct points
$\{x_1,\dots,x_r\}\subset X$ with the multiplicity $\ka_i$
attached to $x_i$. We will write $\Si<\Si'$ to indicate that
$\Si\subset \cl(\Si')$.

\subsubsection{The $S^4$ connections}
\label{subsubsection:S4Conn}
Solutions of the $\SO(3)$-monopole
equations on $S^4$ correspond to anti-self-dual connections on an
$\SU(2)$ bundle $E_\ka\to S^4$. In addition, the splicing process
requires a frame for $E_\ka|_s$ where $s\in S^4$ is the south pole.
We write $M^s_\ka(S^4)$ for the moduli space of framed,
anti-self-dual connections on $E_\ka$.

We also require these connections to be mass-centered, in the
sense that
$$
\int_{S^4} \vec x |F_A|^2 d\vol=0,
$$
where $\vec x:S^4-\{s\}\to\RR^4$ is given by stereographic
projection.

Finally, we require these connections to be concentrated near the
north pole in the sense that
\begin{equation}
\label{eq:DilationParameter}
\int_{S^4} |\vec x |^2|F_A|^2 d\vol \le
\eps,
\end{equation}
for some small constant $\eps$ which shall not be specified here.

The connections on $S^4$ then lie in the space,
$$
\bar M^{s,\diamond}_\ka(S^4),
$$
which is defined to be the Uhlenbeck compactification of the moduli
space of framed, mass-centered, sufficiently concentrated,
anti-self-dual connections on $E_\ka\to S^4$. The space $\bar
M^{s,\diamond}_\ka(S^4)$ is a cone with cone parameter squared given
by the left-hand side of the inequality
\eqref{eq:DilationParameter}. In addition, one can show that $\bar
M^{s,\diamond}_\ka(S^4)$ is a sub-analytic and thus a Thom-Mather
stratified space.

\subsubsection{Describing the map}
\label{subsubsec:SplicingMap} Let $\Si\subset\Sym^\ell(X)$ be the
stratum given by the partition $\ell=\ka_1+\dots+\ka_r$. Let $\bA$
denote the vector of data prescribed by
\begin{enumerate}
\item $(A_0,\Phi_0)\in \bga_{\fs}(N_{\ft(\ell),\fs})$, \item
$\bx\in\Si$, with $\bx=\{x_1,\dots,x_r\}\subset X$, and \item For
$i=1,\dots,r$, $[A_i,F^s_i]\in \barM^{s,\diamond}_{\ka_i}(S^4)$.
\end{enumerate}
We define the image of the splicing map to be
$$
(A',\Phi') = \bga_\Si'(\bA) = (A_0,\Phi_0)\#_{x_1}(A_1,0)\#\dots
\#_{x_r}(A_r,0) ,
$$
where, roughly speaking,
$$
(A',\Phi')(x) \approx
\begin{cases}
(A_0,\Phi_0)(x) & \text{for $x$ away from $x_i$},\\
(A_i,0)(x) & \text{for $x$ near $x_i$}.
\end{cases}
$$
The precise definition of the splicing map requires careful use of
trivializations of the relevant bundles and cut-off functions to
interpolate between the given pairs of sections and connections.
Nonetheless, it is a completely explicit map.

\subsubsection{The domain of the map}
\label{subsubsec:DomainOfSplicingMap}
If the stratum $\Si$ is given by the partition
$\ka_1+\dots+\ka_r=\ell$, then the domain, $\Gl(\ft,\fs,\Si)$, of
the splicing map $\bga_\Si'$ described in \S
\ref{subsubsec:SplicingMap} can be described as a fiber bundle:
\begin{equation}
\label{eq:LocalFiberBundle}
\begin{CD}
\prod_{i=1}^r \bar M^{s,\diamond}_{\ka_i}(S^4) @>>> \Gl(\ft,\fs,\Si)
\\
@. @V \Pi_\Si VV
\\
@. N_{\ft(\ell),\fs}\times\Si
\end{CD}
\end{equation}
with underlying principle bundle determined by the homotopy type
of $X$ and the characteristic classes $c_1(\fs)$, $c_1(\ft)$, $p_1(\ft)$.
Note that because the spaces $\bar M^{s,\diamond}_{\ka_i}(S^4)$ are cones,
the fiber bundle \eqref{eq:LocalFiberBundle} is a cone bundle in the sense
of Definition \ref{defn:LocalConeBundles}.
We further note that \eqref{eq:LocalFiberBundle} is a
non-trivial bundle because of the need to chose trivializations of
the relevant bundles in the splicing map.
We will write $\pi_\Si$ for the composition
\begin{equation}
\label{eq:LocalFiberProjection}
\pi_\Si:\Gl(\ft,\fs,\Si)
\to
N_{\ft(\ell),\fs}\times\Si
\to
M_{\fs}\times\Si
\end{equation}
of the projection in \eqref{eq:LocalFiberBundle} with the obvious projection
defined by the vector bundle $N_{\ft(\ell),\fs}\to M_{\fs}$.

\subsection{Gluing perturbations}
The pair $\bga_\Si'(\bA)$ described in the preceding section is not
a solution of the $\SO(3)$-monopole equations, but it is almost a
solution. That is, if we write the $\SO(3)$-monopole equations as a
map to a suitable Banach space,
$$
\fS:\tsC_{\ft} \to \bB,
$$
then the norm $\|\fS(\bga_\Si'(\bA))\|$ is small.  It is possible to
deform the image of the splicing map so that the deformed image
contains a neighborhood of $M_{\fs}\times\Si$ in $\bar\sM_{\ft}$ as
follows.

First, the linearization of the map $\fS$ is not surjective.  The
cokernel of the linearization can be stabilized with a vector
bundle,
$$
\Upsilon_\Si\to\Gl(\ft,\fs,\Si),
$$
in the sense that the fiber of $\Upsilon_\Si$ over
$(A',\Phi')\in\Imag(\bga'_\Si)$ contains the cokernel of the
linearization of $\fS$ at $(A',\Phi')$. There is then, up to gauge
transformation, a unique solution, $\fp_\Si(A',\Phi')$, of the
equation
$$
\fS((A',\Phi')+\fp_\Si(A',\Phi'))\in \Upsilon_\Si|_{(A',\Phi')},
$$
We call $\fp(A',\Phi')$ the {\em gluing perturbation\/} and the
map $(A',\Phi')\mapsto \fp(A',\Phi')$ is smooth. The gluing map is
then defined by
$$
\bga_\Si=\bga_\Si'+ \fp\circ\bga_\Si'.
$$
The {\em obstruction section\/}, $\fo_\Si$, of $\Upsilon_\Si$ is
defined by $\fS\circ\bga_\Si$. The construction of the map $\fp$
appears in \cite{FL3} and it follows that

\begin{thm}
\label{thm:GluingThm}
The restriction of the gluing map $\bga_\Si$
to the zero-locus of the obstruction map, $\fo_\Si^{-1}(0)$,
parameterizes a neighborhood of $M_{\fs}\times \Si$ in
$\bar\sM_{\ft}$.
\end{thm}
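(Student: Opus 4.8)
The plan is to show that $\bga_\Si$ restricted to $\fo_\Si^{-1}(0)$ is a homeomorphism onto an open neighborhood of $M_\fs\times\Si$ in $\bar\sM_\ft$. By definition $\fo_\Si=\fS\circ\bga_\Si$, so the restriction does land in $\bar\sM_\ft$. The splicing map $\bga'_\Si$ is built so that, as the cone parameters of the factors $\bar M^{s,\diamond}_{\ka_i}(S^4)$ tend to zero, the spliced pair Uhlenbeck-converges to the background pair $(A_0,\Phi_0)$ together with the divisor $\bx\in\Si$; since the gluing perturbation $\fp$ is continuous and uniformly small, $\bga_\Si=\bga'_\Si+\fp\circ\bga'_\Si$ is continuous from the cone-bundle topology on $\Gl(\ft,\fs,\Si)$ to the Uhlenbeck topology on $\bar\sM_\ft$. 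On the cone-point section the obstruction section $\fo_\Si$ reduces to $\fo_\fs$, and $\bga_\fs|_{\fo_\fs^{-1}(0)}$ is a homeomorphism onto a neighborhood of $M_\fs$ in $\sM_{\ft(\ell)}$; hence $M_\fs\times\Si$ is contained in the image of $\bga_\Si|_{\fo_\Si^{-1}(0)}$. The content of the theorem is then three assertions: $\bga_\Si|_{\fo_\Si^{-1}(0)}$ is injective, its image is open in $\bar\sM_\ft$, and its inverse is continuous.

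For injectivity, I would first invoke the a priori estimates behind the construction of $\fp$ in \cite{FL3}: $\fp$ is $C^0$-small uniformly in the gluing data, and the difference of two exact solutions is bounded in terms of the difference of the underlying splicing data. It therefore suffices to recover the datum $\bA=\bigl((A_0,\Phi_0),\bx,[A_1,F^s_1],\dots,[A_r,F^s_r]\bigr)$, up to gauge, from $\bga_\Si(\bA)$. The curvature density of $\bga_\Si(\bA)$ detects the support points $x_i$ of $\bx$ and the concentration scales of the $S^4$ factors, so $\bx$ and those scales are determined; restricting to $X\setminus\bx$, together with injectivity of $\bga_\fs$ on $N_{\ft(\ell),\fs}$, recovers the background pair; and blowing up at each $x_i$, using the mass-centering and framing normalizations defining $\bar M^{s,\diamond}_{\ka_i}(S^4)$, recovers the remaining factors. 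The uniform smallness of $\fp$ guarantees these identifications are not disturbed in passing from $\bga'_\Si(\bA)$ to $\bga_\Si(\bA)$.

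For openness and continuity of the inverse, I would argue along sequences. Let $[A_n,\Phi_n]\in\bar\sM_\ft$ converge to a point $p=([A_0,\Phi_0],\bx)\in M_\fs\times\Si$, or more generally to a point of the image of $\bga_\Si|_{\fo_\Si^{-1}(0)}$; the claim is that $[A_n,\Phi_n]$ lies in that image for $n$ large, with preimages converging to the preimage of $p$. By the definition of Uhlenbeck convergence and by Uhlenbeck compactness, $[A_n,\Phi_n]|_{X\setminus\bx}$ converges modulo gauge to $(A_0,\Phi_0)$ while $|F_{A_n}|^2$ concentrates near the $x_i$ with the charges $\ka_i$ prescribed by the partition. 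Rescaling $A_n$ at each $x_i$ and applying the standard bubbling and removable-singularity analysis produces, after a translation enforcing mass-centering, a limit in $\bar M^{s,\diamond}_{\ka_i}(S^4)$; the concentration bound \eqref{eq:DilationParameter} holds because $[A_n,\Phi_n]$ is close to the level-$\ell$ stratum. Assembling these data, with a compatible choice of local trivializations, yields $\bA_n\in\Gl(\ft,\fs,\Si)$ for which $\bga'_\Si(\bA_n)$ is $C^0$-close to $[A_n,\Phi_n]$ modulo gauge. The construction of $\fp$ in \cite{FL3} produces, in a $C^0$-neighborhood of $\bga'_\Si(\bA_n)$, a \emph{unique} solution of the equation $\fS(\,\cdot\,)\in\Upsilon_\Si$; since the $\SO(3)$-monopole $[A_n,\Phi_n]$ solves this equation (trivially, with $\fS=0$), it follows that $[A_n,\Phi_n]=\bga_\Si(\bA_n)$ and $\fo_\Si(\bA_n)=0$, and $\bA_n$ converges to the cone-point datum $\bA$ representing $p$, which gives continuity of the inverse.

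The step I expect to be the main obstacle is surjectivity. The bubbling analysis at the several points $x_i$ must be carried out uniformly in $n$ and matched to the mass-centering and framing conventions built into $\bar M^{s,\diamond}_{\ka_i}(S^4)$; one must work inside the Uhlenbeck-compactified $S^4$ moduli spaces, so that any deeper bubbling is absorbed into those factors rather than appearing as a new phenomenon; and, because $\Upsilon_\Si$ is present, ``being an $\SO(3)$-monopole near $M_\fs\times\Si$'' must be compared against ``solving $\fS(\,\cdot\,)\in\Upsilon_\Si$'', so the identification of a nearby monopole with a point of $\fo_\Si^{-1}(0)$ rests squarely on the uniqueness clause in the contraction-mapping construction of $\fp$. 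A final technical point is to keep every identification continuous up to the lower strata, not merely smooth on each stratum. These are precisely the issues handled by the analysis of \cite{FL3}, which is why the result can be quoted here and is proved in detail there.
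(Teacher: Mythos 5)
The paper itself contains no proof of this theorem: it is stated as a consequence of the construction of the gluing perturbation $\fp$ in \cite{FL3}, and the corresponding parameterization statement needed globally (Theorem \ref{thm:ExtendedGluingThm}) is explicitly deferred to \cite{FL4}. So there is no in-paper argument to compare against line by line; your outline follows the standard splicing-plus-contraction-mapping paradigm that \cite{FL3} implements, and you correctly locate the substantive content in the uniform smallness and uniqueness properties of $\fp$ and in the surjectivity question.

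Two steps of your sketch, as written, would not survive scrutiny. First, injectivity: you propose to recover $\bx$, the concentration scales, and the $S^4$ factors exactly from the curvature density of $\bga_\Si(\bA)$. The curvature of a glued pair determines these data only approximately (the cut-off functions in the splicing construction smear the curvature and $\fp$ moves it again), so exact recovery fails and two distinct data are not excluded by this argument alone; injectivity in this setting comes instead from the local structure of the gluing map --- an embedding statement on strata via the linearization together with separation estimates for solutions built from well-separated or differently scaled data --- not from reading the data off the solution. Second, surjectivity: you reduce it to ``standard bubbling plus the uniqueness clause of the contraction mapping,'' but the entire difficulty is to make the bubbling analysis uniform as one approaches the lower Uhlenbeck strata and to show that every nearby monopole is close, in the norms in which the contraction operates and with constants uniform in the scales, to a point of the image of the splicing map, so that the uniqueness clause applies at all; this is precisely the part of the program that is not routine and, for the neighborhoods this paper actually uses, is the content of \cite{FL4}. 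Your proposal is a correct map of the terrain, but its two hardest assertions are asserted rather than established.
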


\section{Understanding the overlaps}
\label{sec:Overlaps}
When $\ell=1$, Theorem \ref{thm:GluingThm} is
all we need to compute the intersection number
\eqref{eq:ReducibleIntersectionNumber} because there is only one
stratum $X=\Sym^1(X)$. In \cite{FLLevelOne}, we argue that this
intersection number can be written as
$$
\#\left( \bar\sV(z)\cap\bar\sW^{n-1}\cap\bL_{\ft,\fs} \right)
= \langle e(\Upsilon_\Si)\smile\barmu_p(z)\smile\barmu_c^{n-1},
[\rd \Gl(\ft,\fs,X)] \rangle,
$$
where $\Gl(\ft,\fs,X)$ is defined in \eqref{eq:LocalFiberBundle}.
We use our understanding of the structure group of the fibration
\eqref{eq:LocalFiberBundle} and a pushforward formula for the map
$\pi_X$ defined in \eqref{eq:LocalFiberProjection} to compute the
above cohomological pairing.

For $\ell>1$, we cannot use Theorem \ref{thm:GluingThm} and the
information on the local bundle \eqref{eq:LocalFiberBundle}
given in \S \ref{subsec:SplicingMap} to compute the intersection number
\eqref{eq:ReducibleIntersectionNumber} because more than one gluing
map is necessary to cover a neighborhood of
$M_{\fs}\times\Sym^\ell(X)$. One needs to understand the overlap of
these maps to do a cohomological computation. That is, one cannot
just add up the contributions from each open set as there might be
intersection points in the overlap of two or more maps. Such
problems, with only two maps needed, have been addressed in
\cite{OzsvathBlowUp,LenessBlowUp,LenessWC}. This problem becomes
significantly more difficult with three or more gluing maps are
involved.

The difficulty in understanding these overlaps arises largely from
the definition of the gluing perturbation $\fp$.  That is, while
the splicing map $\bga'_\Si$ is quite explicit, the gluing
perturbation $\fp$ arises from an implicit function theorem
argument and is thus not sufficiently explicit for us to compute
$\bga_{\Si}^{-1}\circ\ga_{\Si'}$.

The idea underlying \cite{FL5} is to show that images of the {\em
splicing maps\/}, rather than the images of the gluing maps, satisfy
the conditions of Definition \ref{defn:LocalConeBundles}. As the
splicing maps are presently defined, this approach might seem
problematic because the images of the splicing maps of two different
strata need not intersect at all, even when the images of the
associated gluing maps do. In \cite{FL5}, we introduce deformations
of the domains of the splicing maps and of the maps themselves to
get splicing maps, $\bga_\Si''$ ,whose overlaps are controlled by
push-out diagrams of the form
\begin{equation}
\label{eq:SplicingPushout}
\begin{CD}
\Gl(\ft,\fs,\Si,\Si')@> \rho^d_{\Si,\Si'}>> \Gl(\ft,\fs,\Si)
\\
@V \rho^u_{\Si,\Si'} VV @V\ga_\Si'' VV
\\
\Gl(\ft,\fs,\Si') @> \bga_{\Si'}'' >> \bar\sC_\ft
\end{CD}
\end{equation}
where the maps $\rho^d_{\Si,\Si'}$ and $\rho^u_{\Si,\Si'}$ are open
embeddings and $\bar\sC_{\ft}$ is defined in
\eqref{eq:ConfigComplete}. We describe the deformation of the domain
in \S \ref{subsec:DeformingTheFiber} and we describe the deformation
of the splicing map in \S \ref{subsec:DeformingSpliceMaps}.

\subsection{Deforming the fiber}
\label{subsec:DeformingTheFiber} The deformation of the domain of
the splicing map mentioned above is a deformation of the fiber of
the diagram \eqref{eq:LocalFiberBundle}. We construct the {\em
spliced ends moduli space\/} $\bar M^{s,\diamond}_{spl,\ka}(S^4)$ as
a deformation of the moduli space $\bar M^{s,\diamond}_\ka(S^4)$
described in \S \ref{subsubsection:S4Conn}. The deformation consists
of replacing  neighborhoods of the punctured trivial strata with
images of splicing maps. By punctured trivial strata, we mean the
subspace,
\begin{equation}
\label{eq:TrivialStrata} \{[\Theta]\}\times
\left(\Sym^{\ka,\diamond}(\RR^4)-\{c_\ka\}\right) \subset \bar
M^{s,\diamond}_\ka(S^4),
\end{equation}
where $\Theta$ is the trivial connection,
$\Sym^{\ka,\diamond}(\RR^4)$ is the subspace of the symmetric
product given by points with center-of-mass equal to zero, and
$c_\ka\in\Sym^{\ka,\diamond}(\RR^4)$ is the point supported
entirely at the origin in $\RR^4$.

A neighborhood of the stratum
$\{[\Theta]\}\times\Si$ in $\bar M^{s,\diamond}_\ka(S^4)$ is
parameterized by a gluing map,
\begin{equation}
\label{eq:S4TrivialGluingDomain} \bga_{S^4,\Si}:
\Si\times\prod_{i=1}^r \bar M^{s,\diamond}_{\ka_i}(S^4) \to
\barM^s_{\ka}(S^4),
\end{equation}
where $\ka_i+\dots+\ka_r=\ka$ is the partition of $\ka$
determining the stratum $\ka$. (Note that the domain of
$\bga_{S^4,\Si}$ is actually twisted by a symmetric group
action, but that is not relevant to this discussion.) We
will write
$$
\sN_\ka \subset \bar M^{s,\diamond}_{\ka_i}(S^4)
$$
for the neighborhood of the subspace \eqref{eq:TrivialStrata}
given by the union of the images of the gluing maps
$\bga_{S^4,\Si}$.

Each gluing map $\bga_{S^4,\Si}$ is a deformation of a splicing map
$\bga_{S^4,\Si}'$ defined as in \S \ref{subsec:SplicingMap}. Hence,
there is an isotopy $\Ga_{S^4,\Si}$ with
$\Ga_{S^4,\Si}(1,\cdot)=\bga_{S^4,\Si}(\cdot)$ and
$\Ga_{S^4,\Si}(0,\cdot)=\bga_{S^4,\Si}'(\cdot)$. Make the obvious
extension of $\Ga_{S^4,\Si}(t,\cdot)$ to all $t\in\RR$. We wish to
write
\begin{equation}
\label{eq:SplicingImageSpace} W_\ka=\cup_\Si
\Imag(\bga_{S^4,\Si}'),
\end{equation}
and then to define a collar of $\rd W_\ka$ by a function
$\la:W_\ka\to [0,1]$ such that $\la^{-1}([0,1/2))$ contains
\eqref{eq:TrivialStrata} and $\la^{-1}(1)=\rd W_\ka$. Then, we
wish to replace $\sN_{\ka}$ with a deformation of $W_\ka$, defined
by replacing a point $\bga_{S^4,\Si}(\bA)$ with
$\Ga_{S^4,\Si}(2\la(\bA)-1,\bA)$.

The problem with the above description is that the space $W_\ka$
is not homeomorphic to $\sN_\ka$ because the images of the
splicing maps defined by different strata $\Si$ need not
intersect. The deformation of $W_\ka$ thus need not be an isotopy
and the resulting space need not be smoothly-stratified.

We repair this argument inductively. We begin by observing that the
splicing map has an associative property which allows us, assuming
the connections in the domain of $\bga_{S^4,\Si}'$ are themselves
the image of a splicing map, to control the overlaps of different
splicing maps. This control then makes $W_\ka$ a smoothly stratified
space homeomorphic to $\sN_{\ka}$, competing the naive argument
above.

\subsubsection{Associativity of the splicing map}
Splicing connections on $S^4$ to the trivial connection on $S^4$
has a nice property which we refer to as the {\em associativity of
splicing\/}.  It can be summarized as follows. Let $A_{i,j}$ be
connections on bundles $E_{\ka_{i,j}}\to S^4$. Let $x_{i,j}$ and
$y_i$ be distinct points in $\RR^4$. Define
$$
A_i=\Theta\#_{x_{i,1}}A_{i,1}\#\dots\#_{x_{i,r_i}}A_{i,s_i}
$$
to be the connection obtained by splicing the connections $A_{i,j}$
to the trivial connection $\Theta$ at the points $x_{i,j}$. Then, if
the connections $A_{i,j}$ are sufficiently concentrated near the
north pole in the sense of \eqref{eq:DilationParameter} relative to
the separation of the points $x_{i,j}$, one has
\begin{equation}
\label{eq:AssociativityOfSplicing}
\begin{aligned}
{}&\Theta\#_{y_1}A_1\#\dots \#_{y_r}A_r
\\
{}&\quad= \Theta\#_{y_1+x_{1,1}}A_{1,1}\#\dots
\#_{y_i+x_{i,j}}A_{i,j}\#\dots \#_{y_r+x_{r,s_r}}A_{r,s_r}
\end{aligned}
\end{equation}
Equation \eqref{eq:AssociativityOfSplicing} identifies the
composition of two splicing maps with a single splicing map. This
identity is crucial in the construction described in the following
section. We note that it is our ability to write down the splicing
map explicitly that makes it possible to obtain Equation
\eqref{eq:AssociativityOfSplicing}.

\subsubsection{Overlapping splicing maps}
We now define  the spliced-end moduli space,
$\barM^{s,\diamond}_{spl,\ka}(S^4)$. This moduli space is a
deformation of $\barM^{s,\diamond}_{\ka}(S^4)$ with the property
that a neighborhood of the subspace \eqref{eq:TrivialStrata} is
given by the image of splicing maps instead of the gluing maps in
\eqref{eq:S4TrivialGluingDomain}. This construction uses induction
on $\ka$. For $\ka=1$, the trivial strata \eqref{eq:TrivialStrata}
is empty because of the absence of the cone point $c_1$ from
\eqref{eq:TrivialStrata}, and we may define
$$
\barM^{s,\diamond}_{spl,1}(S^4)=\barM^{s,\diamond}_{1}(S^4).
$$
Because the cone point, $c_\ka$, is not included in the strata in
\eqref{eq:TrivialStrata}, the domains of the gluing maps
\eqref{eq:S4TrivialGluingDomain} contain only moduli spaces
$\barM^{s,\diamond}_{\ka_i}(S^4)$ with $\ka_i<\ka$.  Thus, using
induction we may require the neighborhood of the punctured trivial
strata to be parameterized by the splicing map with domain
\begin{equation}
\label{eq:ModifiedSplicingDomain}
\Si\times\prod_{i=1}^r \barM^{s,\diamond}_{spl,\ka_i}(S^4),
\end{equation}
instead of the domain of the gluing map
\eqref{eq:S4TrivialGluingDomain}. The associativity of splicing to
the trivial connection \eqref{eq:S4TrivialGluingDomain}, then
ensures that the overlap of the images of two such splicing maps can
be understood by the following pushout diagram:
\begin{equation}
\label{eq:TrivialOverlap}
\begin{CD}
\nu(\Si,\Si')\times
\prod_{i,j}\barM^{s,\diamond}_{spl,\ka_{i,j}}(S^4) @>
\rho^u_{\Si,\Si'}(S^4) >> \Si'
\times\prod_{i,j}\barM^{s,\diamond}_{spl,\ka_{i,j}}(S^4)
\\
@V \rho^d_{\Si,\Si'}(S^4) VV @V \bga_{\Si'}(S^4)' VV
\\
\Si \times\prod_i\barM^{s,\diamond}_{spl,\ka_i}(S^4) @>
\bga_\Si(S^4)'>> \bar \sB
\end{CD}
\end{equation}
We now explain the diagram \eqref{eq:TrivialOverlap}. Let
$\Si<\Si'$ be strata of the subspace \eqref{eq:TrivialStrata}. The
space $\bar\sB$ in the diagram \eqref{eq:TrivialOverlap} is
defined analogously to the space $\bar\sC_{\ft}$ defined in
\eqref{eq:ConfigComplete}.

We now explain the maps $\rho^u_{\Si,\Si'}(S^4)$ and
$\rho^d_{\Si,\Si'}(S^4)$ appearing in \eqref{eq:TrivialOverlap}.
The upper stratum, $\Si'$, is given by a refinement of the
partition giving the lower stratum, $\Si$. That is if $\Si<\Si'$
and $\Si$ is given by the partition $\ka_1+\dots+\ka_r$, then
$\Si'$ will be given by a partition $\ka=\sum_{i,j}\ka_{i,j}$
where $\ka_i=\sum_j\ka_{i,j}$. (There could be more than one such
refinement. Each such refinement corresponds to a different
component of the end of $\Si'$ near $\Si$ and can be treated
separately.  For simplicity of exposition, we ignore both this
issue and problems involving the symmetric group action here.) Let
$\nu(\Si,\Si')\subset\Si'$ be the intersection of a tubular
neighborhood of $\Si$ in $\Sym^{\ka,\diamond}(\RR^4)$ with $\Si'$. The map
$\rho^u_{\Si,\Si'}(S^4)$ in the diagram \eqref{eq:TrivialOverlap}
is defined by the inclusion $\nu(\Si,\Si')\subset\Si'$.

Let $p_{\Si',\Si}:\nu(\Si,\Si')\to \Si$ be the tubular
neighborhood projection map. The fiber of $p_{\Si',\Si}$ will be
points in $\RR^4$. Splice the connections given by a point in
$\prod_{j}\barM^{s,\diamond}_{spl,\ka_{i,j}}(S^4)$ to the trivial
connection at the points in $\RR^4$ given by the point in the
fiber of $p_{\Si',\Si}$. The resulting connection lies in $\bar
M^{s,\diamond}_{spl,\ka_i}(S^4)$ because of the inductive
hypothesis that a neighborhood of the punctured trivial strata in
$\bar M^{s,\diamond}_{spl,\ka_i}(S^4)$ lies in the image of the
splicing map with domain \eqref{eq:ModifiedSplicingDomain}.
The projection map $p_{\Si',\Si}$ and this splicing
construction then define the map $\rho^d_{\Si,\Si'}(S^4)$ in the
diagram \eqref{eq:TrivialOverlap}.

Then, the associativity of splicing to the trivial connection
\eqref{eq:AssociativityOfSplicing} implies that the diagram
\eqref{eq:TrivialOverlap} commutes. Specifically, the compositions
$$
\bga_\Si(S^4)'\circ\rho^d_{\Si,\Si'}(S^4) \quad\text{and}\quad
\bga_{\Si'}(S^4)'\circ\rho^u_{\Si,\Si'}(S^4)
$$
are the splicing maps appearing on the left-hand-side and
right-hand-side, respectively, of
\eqref{eq:AssociativityOfSplicing}. Moreover, one can also show that
any point in the images of both $\bga_{\Si}'$ and of $\bga_{\Si'}'$
appears in the pushout \eqref{eq:TrivialOverlap}. Hence, the
overlaps of the images of the splicing maps are then controlled by
the pushout diagram \eqref{eq:TrivialOverlap}.

Because the maps $\rho^u_{\Si,\Si'}(S^4)$ and
$\rho^d_{\Si,\Si'}(S^4)$ are open embeddings, the union of the
images of the splicing maps, $W_\ka$, is a smoothly-stratified
space. The gluing perturbation gives a smoothly-stratified isotopy
between $W_\ka$ and $\sN_{\ka}$. The argument given before
equation \eqref{eq:SplicingImageSpace} then shows how to deform a
collar of the boundary of $W_\ka(S^4)$ into
$\barM^{s,\diamond}_\ka(S^4)$. The resulting space is
$\barM^{s,\diamond}_{spl,\ka}(S^4)$, completing the induction.

\subsection{Deforming the splicing map}
\label{subsec:DeformingSpliceMaps} Let $\Si<\Si'$ be strata of
$\Sym^\ell(X)$. With the domain, $\Gl(\ft,\fs,\Si)$, of the splicing
maps $\bga_\Si'$ redefined by replacing the spaces
$\barM^{s,\diamond}_\ka(S^4)$ appearing in the fiber of
\eqref{eq:LocalFiberBundle} with the spaces
$\barM^{s,\diamond}_{spl,\ka}(S^4)$, we now describe the
deformations of the splicing maps
$$
\bga_\Si'':\Gl(\ft,\fs,\Si) \to \bar\sC_{\ft},\quad
\bga_{\Si'}'':\Gl(\ft,\fs,\Si') \to \bar\sC_{\ft}
$$
needed to ensure that the overlap of their images is controlled by a
pushout diagram of the form \eqref{eq:SplicingPushout}.

Let $\nu(\Si,\Si')$ again denote the intersection of $\Si'$ with a
tubular neighborhood of $\Si$. We define the space
$\Gl(\ft,\fs,\Si,\Si')$ to be the restriction of the fiber bundle
$\Gl(\ft,\fs,\Si')$ to $N_{\ft(\ell),\fs}\times\nu(\Si,\Si')$
appearing in \eqref{eq:TrivialOverlap} and we define
$\rho^u_{\Si,\Si'}$ to be the inclusion of bundles:
\begin{equation}
\label{eq:UpwardsOverlap}
\begin{CD}
\Gl(\ft,\fs,\Si,\Si') @> \rho^u_{\Si,\Si'} >> \Gl(\ft,\fs,\Si')
\\
@V \Pi_{\Si'} VV  @V \Pi_{\Si'} VV
\\
N_{\ft(\ell),\fs}\times\nu(\Si,\Si') @>>>
N_{\ft(\ell),\fs}\times\Si'
\end{CD}
\end{equation}
The definition of
$$
\rho^d_{\Si,\Si'}: \Gl(\ft,\fs,\Si,\Si')\to \Gl(\ft,\fs,\Si),
$$
is similar to that of the map $\rho^d_{\Si,\Si'}(S^4)$ appearing
in \eqref{eq:TrivialOverlap}. Let $p_{\Si,\Si'}:\nu(\Si,\Si')\to
\Si$ be the projection map of the tubular neighborhood. The fiber
of $p_{\Si,\Si'}$ is, up to a choice of a trivialization of the
tangent bundle of $X$, a collection of points in $\RR^4$. The
fiber of the composition,
$$
(\id_{N_{\ft(\ell),\fs}}\times p_{\Si,\Si'})\circ \Pi_{\Si'}:
\Gl(\ft,\fs,\Si,\Si') \to N_{\ft(\ell),\fs}\times\Si
$$
is then a collection of points in $\RR^4$ and the framed connections
in the fiber of $\Pi_{\Si'}$. Just as in the construction of the map
$\rho^d_{\Si,\Si'}(S^4)$, this data can be spliced to the trivial
connection to get connections on $S^4$, giving an element of the
fiber of $\Pi_\Si$. (Note that for the definition of the map
$\rho^d_{\Si,\Si'}$ to make sense, we must redefine the fibers of
$\Pi_\Si$ in $\Gl(\ft,\fs,\Si)$ to have the connections in $W_\ka$
rather than $\sN_{\ka}$, as discussed in the beginning of \S
\ref{subsec:DeformingTheFiber}.) This defines
$\rho^d_{\Si,\Si'}$ as a fiber bundle map:
$$
\begin{CD}
\Gl(\ft,\fs,\Si,\Si') @> \rho^d_{\Si,\Si'} >> \Gl(\ft,\fs,\Si)
\\
@V\Pi_{\Si'} VV @V\Pi_\Si VV
\\
N_{\ft(\ell),\fs} \times \nu(\Si,\Si') @>
\id_{N_{\ft(\ell),\fs}}\times p_{\Si,\Si'} >>
N_{\ft(\ell),\fs}\times \Si
\end{CD}
$$
Then, we wish to define deformations, $\bga_\Si''$ and
$\bga_{\Si'}''$, of the splicing maps $\bga_\Si'$ and
$\bga_{\Si'}'$, so that the following diagram commutes
\begin{equation}
\label{eq:SplicingPushout2}
\begin{CD}
\Gl(\ft,\fs,\Si,\Si')@> \rho^d_{\Si,\Si'}>> \Gl(\ft,\fs,\Si)
\\
@V \rho^u_{\Si,\Si'} VV @V\ga_\Si'' VV
\\
\Gl(\ft,\fs,\Si') @> \bga_{\Si'}'' >> \bar\sC_\ft
\end{CD}
\end{equation}
The corresponding diagram for splicing to the trivial connection on
$S^4$, \eqref{eq:TrivialOverlap}, commuted because of the
associativity of splicing equality,
\eqref{eq:AssociativityOfSplicing}. No such equality holds when the
connection to which one is splicing, $A_0$ in the language of \S
\ref{subsec:SplicingMap}, is not flat and the manifold to which
one is splicing does not admit a flat metric. However, one
can ``flatten'' the connection $A_0$ and the metric $g$ on $X$ on
small balls around the splicing points in $\bx$.
To obtain the deformed splicing map,
$\bga_\Si''$, one then simply ``flattens'' the connection $A_0$ on
larger neighborhoods of the splicing points, using a locally
flattened metric to identify neighborhoods of these points with
neighborhoods of the north pole in $S^4$.  With this deformation of
the splicing map, the diagram \eqref{eq:SplicingPushout2} commutes.

Define
\begin{equation}
\label{eq:GlobalSplicingData}
\Gl(\ft,\fs,X)=\cup_{\Si\subset\Sym^\ell(X)}\bga_{\Si}''(\Gl(\ft,\fs,\Si)).
\end{equation}
The space $\Gl(\ft,\fs,X)$ is a smoothly-stratified space given
by a union of local cone bundle neighborhoods.
The pushout diagram \eqref{eq:SplicingPushout2} controls
the overlaps of these diagrams and from this control,
one can see that:

\begin{thm}
\label{thm:ConesControlled}
The space $\Gl(\ft,\fs,X)$ is a union of local cone bundle
neighborhoods which satisfy the conditions of
Definition \ref{defn:LocalConeBundles}.
\end{thm}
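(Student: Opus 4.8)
The plan is to verify, stratum by stratum, that the data $\{(\bga_\Si''(\Gl(\ft,\fs,\Si)),\pi_\Si,t_\Si)\}$ produced in \S\ref{subsec:DeformingSpliceMaps} meet each bullet of Definition \ref{defn:LocalConeBundles}, using the pushout diagram \eqref{eq:SplicingPushout2} as the single structural input. First I would set $\sO_\Si=\bga_\Si''(\Gl(\ft,\fs,\Si))$ and observe that, since $\bga_\Si''$ is an open embedding on each stratum (it is a splicing map composed with the local ``flattening'' isotopy, hence a homeomorphism onto its image by construction), the first bullet—$\sO_\Si$ is a cone-bundle neighborhood of $M_{\fs}\times\Si$ modeled on $\Gl(\ft,\fs,\Si)\to N_{\ft(\ell),\fs}\times\Si\to M_{\fs}\times\Si$—is immediate from \eqref{eq:LocalFiberBundle}, \eqref{eq:LocalFiberProjection}, and Theorem \ref{thm:GluingThm} (which identifies the zero locus of $\fo_\Si$ inside $\sO_\Si$ with a genuine neighborhood in $\bar\sM_{\ft}$). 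The cone parameter $t_\Si$ is the pullback under $\bga_\Si''{}^{-1}$ of the cone parameter on the fiber $\prod_i\barM^{s,\diamond}_{spl,\ka_i}(S^4)$, which is well defined because each spliced-ends factor is a cone by the construction of \S\ref{subsec:DeformingTheFiber}.

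The second step is the overlap condition $\sO_\Si\cap\sO_{\Si'}\neq\emptyset$ iff $\Si<\Si'$ or $\Si'<\Si$. One direction follows because \eqref{eq:SplicingPushout2} exhibits a common open image whenever $\Si<\Si'$; the other direction—that images of incomparable strata are disjoint—follows from the corresponding statement for the $S^4$-splicing maps established after \eqref{eq:TrivialOverlap} (``any point in the images of both $\bga_\Si'$ and $\bga_{\Si'}'$ appears in the pushout''), transported to $\bar\sC_\ft$ via the flattening deformation. The third step is the Thom–Mather control conditions \eqref{eq:TMControl}. Here the pushout \eqref{eq:SplicingPushout2} is exactly what is needed: on $\sO_\Si\cap\sO_{\Si'}$ the identity $\bga_\Si''\circ\rho^d_{\Si,\Si'}=\bga_{\Si'}''\circ\rho^u_{\Si,\Si'}$ together with the bundle-map descriptions of $\rho^d_{\Si,\Si'}$ and $\rho^u_{\Si,\Si'}$ (covering $\id\times p_{\Si,\Si'}$ and the inclusion $\nu(\Si,\Si')\hookrightarrow\Si'$ respectively) forces $\pi_\Si\circ\pi_{\Si'}=\pi_\Si$; and $t_\Si\circ\pi_{\Si'}=t_\Si$ because the $\RR^4$-splicing that builds $\rho^d_{\Si,\Si'}$ leaves the concentration parameter of the glued-in $S^4$ connections (whence $t_\Si$) unchanged—this is the quantitative content of the associativity identity \eqref{eq:AssociativityOfSplicing}, which is exact at the level of splicing maps. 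Submersivity of $(\pi_\Si,t_\Si)$ on each stratum is inherited from the smoothness of the splicing map and the fact that $\barM^{s,\diamond}_{spl,\ka}(S^4)$ is sub-analytic and Thom–Mather stratified with $t$ a submersion on strata, as recorded in \S\ref{subsubsection:S4Conn}.

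The final step, and the one I expect to be the main obstacle, is the compatible structure groups condition: producing a compact Lie group $G_\Si$ so that $\pi_\Si:\Gl(\ft,\fs,\Si)\to M_{\fs}\times\Si$ is a $G_\Si$-bundle, that each overlap $\sO_\Si\cap\sO_{\Si'}$ is a $G_\Si$-subbundle and a $G_{\Si'}$-subbundle, and that the level sets $t_{\Si'}^{-1}(\eps)$ are $G_\Si$-subbundles of the overlap. The group $G_\Si$ is dictated by the splicing construction—it is built from the frame bundle $\SO(4)$ of $(X,g)$ over the splicing points, the structure groups of $N_{\ft(\ell),\fs}\to M_{\fs}$, and the $\SU(2)$/frame data on the $S^4$ factors, assembled as in \eqref{eq:LocalFiberBundle}. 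The subtlety is that the maps $\rho^u_{\Si,\Si'},\rho^d_{\Si,\Si'}$ must be shown to be \emph{equivariant} for a common subgroup: on the $\Si'$ side $\sO_\Si\cap\sO_{\Si'}$ is the restriction to $\nu(\Si,\Si')$ and is manifestly a $G_{\Si'}$-subbundle, but recognizing it as a $G_\Si$-subbundle requires that the tubular-neighborhood projection $p_{\Si,\Si'}$ and the iterated $\RR^4$-splicing respect the $G_\Si$-action, which in turn forces one to choose the tubular neighborhood $\nu(\Si,\Si')$ and its projection \emph{equivariantly} with respect to the symmetric-group-twisted $\SO(4)^r$ action on $\Sym^{\ka,\diamond}(\RR^4)$. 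I would handle this by fixing, once and for all, an $\SO(4)$-equivariant family of tubular neighborhoods in the symmetric products (the flattened metric makes the relevant normal exponential maps equivariant), so that $p_{\Si,\Si'}$ is a bundle map in the required sense, after which the claim that $t_{\Si'}$-level sets are $G_\Si$-subbundles reduces to the observation that the concentration functional on $S^4$ connections is invariant under the relevant frame rotations. This equivariance bookkeeping—intertwining the $S^4$-level structure groups of \S\ref{subsec:DeformingTheFiber} with the base frame data over $M_{\fs}\times\Si$, and doing so compatibly across all $\Si<\Si'$ simultaneously by the same induction on $|\ka|$ used to build $\barM^{s,\diamond}_{spl,\ka}(S^4)$—is where the real work lies; the remaining conditions of Definition \ref{defn:LocalConeBundles} are then formal consequences of \eqref{eq:SplicingPushout2}.
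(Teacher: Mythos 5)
Your proposal follows essentially the same route as the paper: the paper's own argument for Theorem \ref{thm:ConesControlled} is precisely the construction of \S\ref{subsec:DeformingTheFiber}--\S\ref{subsec:DeformingSpliceMaps} --- replace the fibers of \eqref{eq:LocalFiberBundle} by the spliced-ends moduli spaces $\barM^{s,\diamond}_{spl,\ka}(S^4)$, flatten the metric and the background pair to deform $\bga_\Si'$ to $\bga_\Si''$, and read off the overlap control (hence the Thom--Mather and compatible-structure-group conditions of Definition \ref{defn:LocalConeBundles}) from the pushout diagram \eqref{eq:SplicingPushout2}, whose commutativity rests on the associativity identity \eqref{eq:AssociativityOfSplicing}. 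The one caveat is that your claim that $t_\Si\circ\pi_{\Si'}=t_\Si$ holds ``because splicing leaves the concentration parameter unchanged'' is not literally delivered by \eqref{eq:AssociativityOfSplicing} (the scale of a spliced connection also receives contributions from the separations of the splicing points, so the exact identity requires a compatible choice of the cone functions on the spliced ends), but the paper glosses this point at the same level, deferring it, like the structure-group bookkeeping you rightly single out, to the detailed constructions of \cite{FL5}.
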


In \cite{FL4}, we will extend the results of \cite{FL3}
constructing a gluing
perturbation of the image of these splicing maps to parameterize a
neighborhood of $M_{\fs}\times\Sym^\ell(X)$ in $\bar\sM_{\ft}/S^1$,
giving the following technical result on which the proofs
of Theorem \ref{conj:PTConj} and Theorem \ref{thm:CobordismResult} rely.

\begin{thm}
\label{thm:ExtendedGluingThm}
There is a section $\fo$ of a pseudo-vector
bundle $\Upsilon\to \Gl(\ft,\fs,X)$ and a stratum-preserving
deformation of the inclusion $\Gl(\ft,\fs,X)\to\bar\sC_{\ft}/S^1$
such that the restriction of this deformation to $\fo^{-1}(0)$
parameterizes a neighborhood of $M_{\fs}\times\Sym^\ell(X)$ in
$\bar\sM_{\ft}/S^1$. In addition, the restriction of the
obstruction section $\fo$ to any stratum vanishes transversely.
\end{thm}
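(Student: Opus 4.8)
The plan is to carry out the gluing-perturbation construction of \cite{FL3} over one stratum of $\Sym^\ell(X)$ at a time, using the deformed splicing maps $\bga_{\Si}''$ produced in \S\ref{subsec:DeformingSpliceMaps}, and then to patch the resulting obstruction bundles and gluing perturbations across the overlaps, the patching being forced by the pushout diagrams \eqref{eq:SplicingPushout2} of Theorem \ref{thm:ConesControlled}.

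First I would fix a stratum $\Si\subset\Sym^\ell(X)$ and repeat the analysis of \cite{FL3}. The linearization $d\fS$ of the $\SO(3)$-monopole map $\fS$ at a spliced pair $\bga_{\Si}''(\bA)$ is not surjective, but its image has a continuously varying finite-dimensional complement, obtained by splicing together the fixed obstruction data over $M_{\fs}$ used in \cite{FL2a} with obstruction data for the factors $\bar M^{s,\diamond}_{spl,\ka_i}(S^4)$; this gives a vector bundle $\Upsilon_{\Si}\to\Gl(\ft,\fs,\Si)$ whose fiber contains $\coker\big(d\fS|_{\bga_{\Si}''(\bA)}\big)$. An implicit-function-theorem argument on the $\eps$-scale of the splicing then produces a unique (modulo gauge) small gluing perturbation $\fp_{\Si}$ with $\fS\big(\bga_{\Si}''(\bA)+\fp_{\Si}(\bga_{\Si}''(\bA))\big)\in\Upsilon_{\Si}|_{\bga_{\Si}''(\bA)}$, a smooth gluing map $\bga_{\Si}=\bga_{\Si}''+\fp_{\Si}\circ\bga_{\Si}''$, and an obstruction section $\fo_{\Si}=\fS\circ\bga_{\Si}$ of $\Upsilon_{\Si}$, whose zero locus parameterizes a neighborhood of $M_{\fs}\times\Si$ in $\bar\sM_{\ft}$ by Theorem \ref{thm:GluingThm}. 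The whole construction is $S^1$-equivariant. Transversality of $\fo_{\Si}$ along each stratum of $\Gl(\ft,\fs,\Si)$ is exactly what \cite{FL3} gives from genericity of the geometric perturbation of \eqref{eq:SO3}: along the top stratum it reflects that $M_{\fs}$ is cut out transversely and that the splicing parameters span the remaining normal directions, and along a deeper stratum one runs the same argument with $\ell$ replaced by the relevant smaller value.

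The main obstacle is the overlap compatibility. Given $\Si<\Si'$, I must show that, after shrinking the tubular neighborhood $\nu(\Si,\Si')$ if necessary, $(\rho^u_{\Si,\Si'})^*\Upsilon_{\Si'}$ and $(\rho^d_{\Si,\Si'})^*\Upsilon_{\Si}$ agree over $\Gl(\ft,\fs,\Si,\Si')$ and that, under this identification, $(\rho^u_{\Si,\Si'})^*\fo_{\Si'}=(\rho^d_{\Si,\Si'})^*\fo_{\Si}$. The first point follows by making the cokernel stabilization functorial: commutativity of \eqref{eq:SplicingPushout2}, together with the associativity of splicing \eqref{eq:AssociativityOfSplicing}, identifies the approximate solution $\bga_{\Si'}''\circ\rho^u_{\Si,\Si'}$ with $\bga_{\Si}''\circ\rho^d_{\Si,\Si'}$ up to an error supported only in the ``flattening'' regions around the splicing points, so the two copies of $d\fS$ differ by a small controlled term, and since the stabilizing obstruction data is spliced from fixed pieces over $M_{\fs}$ and over the $S^4$-factors it can be chosen literally pulled back under $\rho^d_{\Si,\Si'}$. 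The second point is then forced by the \emph{uniqueness} half of the implicit-function-theorem argument: over the overlap, $(\rho^u_{\Si,\Si'})^*\fp_{\Si'}$ and $(\rho^d_{\Si,\Si'})^*\fp_{\Si}$ solve the same defining equation with values in the same canonically identified obstruction bundle, hence coincide modulo gauge, and therefore so do the obstruction sections and the gluing maps. The new analytic input, relative to \cite{FL3}, is the estimate controlling the errors introduced by flattening the connection $A_0$ and the metric $g$ on small balls around $\bx$ and by the spliced-ends deformation of the $S^4$-factors; these are bounded just as the splicing errors of \cite{FL3}, using that flattening alters $A_0$ and $g$ only on small balls and that the spliced-ends deformation of $\bar M^{s,\diamond}_{\ka}(S^4)$ is, by \S\ref{subsec:DeformingTheFiber}, isotopic to the identity through a gluing perturbation.

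Finally I would assemble the global data. Since the $\Upsilon_{\Si}$ are pulled back from one another over overlaps, they glue to a pseudo-vector bundle $\Upsilon\to\Gl(\ft,\fs,X)$, and the $\fo_{\Si}$ glue to a section $\fo$; the rescaled, $S^1$-equivariant perturbations $H_t(A',\Phi')=(A',\Phi')+t\,\fp(A',\Phi')$, $t\in[0,1]$, then give the required stratum-preserving deformation of the inclusion $\Gl(\ft,\fs,X)\hookrightarrow\bar\sC_{\ft}/S^1$ of \eqref{eq:GlobalSplicingData}, with $H_0$ the inclusion and $H_1$ the global gluing map. Restricting $H_1$ to $\fo^{-1}(0)$, the local assertion of Theorem \ref{thm:GluingThm} together with the overlap compatibility shows the result is a stratum-preserving homeomorphism onto a neighborhood of $M_{\fs}\times\Sym^\ell(X)$ in $\bar\sM_{\ft}/S^1$, while transversality of $\fo$ on each stratum of $\Gl(\ft,\fs,X)$ was verified stratum by stratum above. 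I expect the overlap-compatibility step --- the functorial choice of stabilizing obstruction data and the error estimates for the flattening --- to be where essentially all of the technical work lies.
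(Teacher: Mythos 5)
You should first be aware that the paper itself contains no proof of Theorem \ref{thm:ExtendedGluingThm}: it is stated as a technical result whose proof is deferred to the forthcoming paper \cite{FL4} and is described only as ``a routine extension of the results of \cite{FL3}''; indeed Theorem \ref{conj:PTConj} explicitly assumes it. Measured against the sketch the paper does give, your outline follows the intended route --- run the \cite{FL3} gluing-perturbation and cokernel-stabilization construction over the image of the deformed splicing maps $\bga_\Si''$, use the pushout diagrams \eqref{eq:SplicingPushout2} and the associativity of splicing to control overlaps, and assemble a global obstruction section and a deformation of the inclusion of $\Gl(\ft,\fs,X)$ --- so as a program it is the right one.

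Two points in your overlap step are, however, genuinely problematic as written. First, you propose to show that $(\rho^u_{\Si,\Si'})^*\Upsilon_{\Si'}$ and $(\rho^d_{\Si,\Si'})^*\Upsilon_{\Si}$ literally agree and hence glue to a vector bundle on $\Gl(\ft,\fs,X)$; but the rank of the obstruction stabilization depends on the stratum (how the charge $\ell$ is distributed among the $S^4$ factors and whether points sit on the Uhlenbeck boundary of $\barM^{s,\diamond}_{spl,\ka_i}(S^4)$), which is exactly why the paper's global $\Upsilon$ is only a \emph{pseudo}-vector bundle. So the patching cannot be an identification of equal-rank bundles; one needs a compatibility of stabilizations of varying rank (e.g.\ inclusions of the smaller cokernel stabilization into the larger one, compatibly with the sections), and the uniqueness clause of the implicit function theorem only forces agreement of the perturbations once the stabilizing subspaces have been so matched. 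Second, the statement that the zero locus of $\fo$ parameterizes a \emph{neighborhood} of $M_{\fs}\times\Sym^\ell(X)$ is a surjectivity assertion --- every nearby monopole, including those limiting to corners where several strata of $\Sym^\ell(X)$ meet, must lie in the image of some deformed gluing map --- and this is precisely the content of \cite{FL4} (``Surjectivity of gluing maps''). You invoke Theorem \ref{thm:GluingThm} for this, but that theorem concerns the original gluing maps $\bga_\Si$ built from $\bga_\Si'$, not the maps built from the flattened metric/connection and the spliced-ends fibers $\barM^{s,\diamond}_{spl,\ka}(S^4)$; re-establishing coverage for the deformed construction, uniformly across overlapping strata, is the substantive analytic work your proposal acknowledges only in passing. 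So your write-up is a reasonable plan, consistent with the paper's intentions, but it is not yet a proof, and the two issues above are where it would need real content rather than an appeal to uniqueness and to the undeformed local gluing theorem.
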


\begin{rmk}
The obstruction bundle, $\Upsilon\to \Gl(\ft,\fs,X)$, is only a
pseudo-vector bundle because its rank depends on the stratum of
$\Gl(\ft,\fs,X)$. Although $\Upsilon$ is not a vector bundle,
using a relative Euler class argument, one can show that there is
a rational cohomology class on $\rd\Gl(\ft,\fs,X)$ which acts as
an Euler class of $\Upsilon$.  We refer to this class as
$e(\Upsilon)$.
\end{rmk}

We omit any further discussion of the gluing perturbation because
as it is a deformation, to do any cohomological calculations, it
suffices to work with the images of the deformed splicing maps.

Finally, we observe that there is an $S^1$ action on
the space $\Gl(\ft,\fs,X)$ and on the obstruction bundle $\Upsilon$
such that the splicing map and gluing perturbation are equivariant
with respect to this action on $\Gl(\ft,\fs,X)$ and the action
\eqref{eq:S1Action} on $\bar\sC_{\ft}$.

\section{The cohomological formalism}
\label{sec:CohomolFormalism} Using Theorem
\ref{thm:ExtendedGluingThm}, the desired intersection number can be
written as a cohomological pairing,
\begin{equation}
\label{eq:DualPairing}
\#\left(
\bar\sV(z)\cap\bar\sW^{n-1}\cap\bL_{\ft,\fs} \right)
=
\langle e(\Upsilon)\smile\barmu_p(z)\smile\barmu_c^{n-1}, [\rd \Gl(\ft,\fs,X)/S^1] \rangle,
\end{equation}
to which Theorems \ref{thm:ConesControlled} and
\ref{thm:ExtendedGluingThm} allow us to apply the formalism of \S
\ref{subsec:Model}.  Explicitly, we write
$$
\rd
\Gl(\ft,\fs,X)/S^1=
\cup_i\ \bL^{\vir}_{\ft,\fs}(\Si_i),
$$
where $t_i:\Gl(\ft,\fs,\Si_i)\to [0,\infty)$ is the cone function
and
$$
\bL^{\vir}_{\ft,\fs}(\Si_i)=
\Gl(\ft,\fs,\Si_i)/S^1\cap t_i^{-1}(\eps_i)
-\left(\cup_{j\neq i} t_j^{-1}([0,\eps_j))\right).
$$
The compatible structure group condition implies that
there is a pre-compact subspace $K_i\Subset \Si_i$
such that
the restriction of $\pi_i$ to $\bL^{\vir}_{\ft,\fs}(\Si_i)$ defines
a fiber bundle,
\begin{equation}
\label{eq:LinkPieceBundle} \pi_i: \bL^{\vir}_{\ft,\fs}(\Si_i)\to
M_{\fs}\times K_i,
\end{equation}
with the same
structure group as the fiber bundle in \eqref{eq:LocalFiberBundle}.
Hence, the characteristic classes of the bundle
\eqref{eq:LinkPieceBundle} are given by appropriate symmetric
products of $p_1(X)$, $e(X)$, $c_1(\fs)$, $c_1(\ft)$, and
$p_1(\ft)$, and the cohomology class
$$
\mu_\fs\in H^2(M_{\fs})
$$
defining the Seiberg-Witten invariant.

As described after \eqref{eq:DecomposeL}, the strata of
$\bL^{\vir}_{\ft,\fs}(\Si_i)$ are smooth manifolds with corners. The
boundary of $\bL^{\vir}_{\ft,\fs}(\Si_i)$ is
\begin{equation}
\label{eq:DefineLinkPieceBoundary}
\rd \bL^{\vir}_{\ft,\fs}(\Si_i)
=
\cup_{j\neq i}\ \rd_j \bL^{\vir}_{\ft,\fs}(\Si_i)
\quad\text{where}\quad
\rd_j \bL^{\vir}_{\ft,\fs}(\Si_i)= \bL^{\vir}_{\ft,\fs}(\Si_i)\cap t_j^{-1}(\eps_j).
\end{equation}
We abbreviate the cohomology class in \eqref{eq:DualPairing} by
$$
\Om(z,n)=
e(\Upsilon)\smile\barmu_p(z)\smile\barmu_c^{n-1}.
$$
To apply the pushforward-pullback argument in \S \ref{subsec:Model}
to compute the pairing \eqref{eq:DualPairing}, we need to select a
representative of the cohomology class $\Om(z,n)$ such that:
\begin{itemize}
\item
The representative has compact support away from the boundaries
$\rd \bL^{\vir}_{\ft,\fs}(\Si_i)$,
\item
The restriction of the representative
to each component $\bL^{\vir}_{\ft,\fs}(\Si_i)$ is a product
of an equivariant cohomology class on the fiber and a
cohomology class pulled back from the base of the fiber
bundle \eqref{eq:LinkPieceBundle}.
\end{itemize}
We specify such a representative of $\Om(z,n)$ by constructing a
quotient, $\widehat\bL^{\vir}_{\ft,\fs}$, of $\rd\Gl(\ft,\fs,X)/S^1$
from which $\Om(z,n)$ is pulled back. Recall that
$z=h^{\delta-2m}x^m$, where $h\in H_2(X;\RR)$ and $x\in H_0(X;\ZZ)$
was the generator.

\begin{prop}
\label{prop:QuotientProperties}
There is a quotient $\widehat \bL^{\vir}_{\ft,\fs}$
of $\rd\Gl(\ft,\fs,X)/S^1$ with quotient map
$$
q:\rd\Gl(\ft,\fs,X)/S^1 \to \widehat \bL^{\vir}_{\ft,\fs},
$$
such that
\begin{equation}
\label{eq:CohomPulledBack}
\Om(z,n)=q^*\widehat\Om(z,n),\quad\text{for}\quad
\widehat\Om(z,n)\in H^d(\widehat\bL^{\vir}_{\ft,\fs}),
\end{equation}
where $d=\dim \rd\Gl(\ft,\fs,X)/S^1$, which has the following
properties:
\begin{enumerate}
\item
The image of each boundary, $q(\rd_j\bL^{\vir}_{\ft,\fs}(\Si_i))$,
has codimension greater than or equal to two,
\item
The image of each component under the quotient map,
$$
\widehat \bL^{\vir}_{\ft,\fs}(\Si_i)=q(\bL^{\vir}_{\ft,\fs}(\Si_i)),
$$
is a fiber bundle fitting into the diagram
\begin{equation}
\label{eq:QuotientFiberDiagram}
\begin{CD}
\bL^{\vir}_{\ft,\fs}(\Si_i)
@> q >>
\widehat \bL^{\vir}_{\ft,\fs}(\Si_i)
@> \tilde g_i >>
\EG_i\times_{G_i}\widehat F_i(\beps)
\\
@V \pi_i VV  @V \hat\pi_i VV @V m_iVV
\\
M_{\fs}\times  K_i @>>>
M_{\fs}\times \cl(\Si_i)
@> g_i >> \BG_i
\end{CD}
\end{equation}
where $\widehat F_i(\beps)$ is the fiber of $\hat\pi_i$.
\item
The homotopy type of the fiber bundle
$\hat\pi_i$ in \eqref{eq:QuotientFiberDiagram}
depends only on the characteristic classes
of the bundle \eqref{eq:LinkPieceBundle}.
\item
The restriction of $\widehat\Om(z,n)$ to $\widehat \bL^{\vir}_{\ft,\fs}(\Si_i)$,
$\widehat \Om_i$, satisfies
\begin{equation}
\label{eq:QuotientCohom} \widehat\Om_i= \sum_j p_{i,j}(\tilde
g_i^*\hat \nu^{d-j})\smile (\widehat\pi_i^*\hat\om_{i,j}),
\end{equation}
where $\hat\nu$ is the first Chern class of an $S^1$ action on the
fibers $\widehat F_i(\beps)$, $\hat \om_{i,j}\in H^j(M_{\fs}\times
\widehat K_i)$ is a polynomial in
characteristic classes of $\pi_i$ and the Poincar\'e duals of $h$
and $x$, and $p_{i,j}$ are universal constants.
\end{enumerate}
\end{prop}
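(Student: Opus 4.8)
The plan is to build $\widehat{\bL}^{\vir}_{\ft,\fs}$ one cone-bundle neighborhood at a time and then to glue the pieces along the overlaps that are already under control via the pushout diagrams \eqref{eq:SplicingPushout2} (and, in the $S^4$ model, \eqref{eq:TrivialOverlap}). The first step is bookkeeping: one recalls from \cite{FL2a,FL2b,FLLevelOne} the explicit form of the restrictions of $e(\Upsilon)$, $\barmu_p(h)$, $\barmu_p(x)$, and $\barmu_c$ to a single piece $\bL^{\vir}_{\ft,\fs}(\Si_i)$, regarded as the bundle \eqref{eq:LinkPieceBundle} with fiber $F_i(\beps)$. Each such class decomposes as a finite sum of products of (a) a class pulled back from the base $M_{\fs}\times\cl(\Si_i)$, polynomial in the characteristic classes $p_1(X), e(X), c_1(\fs), c_1(\ft), p_1(\ft), \mu_{\fs}$ of \eqref{eq:LinkPieceBundle} and in the Poincar\'e duals of $h$ and $x$, and (b) a $G_i\times S^1$-equivariant class on the fiber, built from $\hat\nu$ and from the tautological data of the $S^4$-splicing factors $\bar M^{s,\diamond}_{spl,\ka_a}(S^4)$. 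This isolates the finite-dimensional subspace of $H^\bullet_{G_i\times S^1}(F_i(\beps))$ that $\Om(z,n)$ actually detects, and it is this subspace that the quotient will retain.

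With that identified, I would define the reduced fiber $\widehat F_i(\beps)$ to be the quotient of $F_i(\beps)$ that kills precisely the complementary equivariant cohomology and, near each boundary face $F_i(\beps)\cap t_j^{-1}(\eps_j)$, collapses the collar direction given by the cone parameter of the bubble attached along $\Si_j$; since $\widehat F_i(\beps)$ is manufactured from the abstract bundle data alone, its homotopy type --- and hence that of $\hat\pi_i$ --- depends only on the characteristic classes above, giving (3). Then set $\widehat\bL^{\vir}_{\ft,\fs}(\Si_i)$ to be the $G_i$-bundle over the full closure $M_{\fs}\times\cl(\Si_i)$ associated to the principal bundle of \eqref{eq:LinkPieceBundle} with fiber $\widehat F_i(\beps)$; this extension over $\cl(\Si_i)$ exists because Theorem \ref{thm:ConesControlled} supplies the compatible structure group condition. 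The quotient map on $\bL^{\vir}_{\ft,\fs}(\Si_i)$ is the fiberwise collapse $F_i(\beps)\to\widehat F_i(\beps)$ followed by the identification with this associated bundle over the precompact $K_i$. Over an overlap $\nu(\Si_i,\Si_j)$ the pushout \eqref{eq:SplicingPushout2} re-splices the $S^4$-data and exhibits $\bL^{\vir}_{\ft,\fs}(\Si_j)$ there as a sub-cone-bundle of $\bL^{\vir}_{\ft,\fs}(\Si_i)$, so the two local collapses match after $\rho^u_{\Si_i,\Si_j}$ and $\rho^d_{\Si_i,\Si_j}$; the associativity of splicing \eqref{eq:AssociativityOfSplicing} makes this compatibility automatic on triple and higher overlaps, so the local quotient maps patch to a single $q$, and the identity $\Om(z,n)=q^*\widehat\Om(z,n)$ of \eqref{eq:CohomPulledBack} holds because, by Step 1, every constituent of $\Om(z,n)$ is pulled back along the collapses.

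It remains to verify (1), (2) and (4). For (1): a seam $\rd_j\bL^{\vir}_{\ft,\fs}(\Si_i)=\bL^{\vir}_{\ft,\fs}(\Si_i)\cap t_j^{-1}(\eps_j)$ has codimension one in $\bL^{\vir}_{\ft,\fs}(\Si_i)$, but the collapse $F_i(\beps)\to\widehat F_i(\beps)$ contracts, along this face, the bubble-cone collar direction, so $q$ carries it onto a subset of codimension at least two in $\widehat\bL^{\vir}_{\ft,\fs}$; consequently $\widehat\bL^{\vir}_{\ft,\fs}$ has a fundamental class and $q_*[\rd\Gl(\ft,\fs,X)/S^1]=[\widehat\bL^{\vir}_{\ft,\fs}]$, which together with \eqref{eq:CohomPulledBack} is exactly what the pushforward--pullback argument of \S\ref{subsec:Model} requires. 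Property (2) is the associated-bundle diagram for the $G_i$-bundle $\hat\pi_i$ just built, with $g_i$ the classifying map of its principal bundle and $\tilde g_i$ the induced map to $\EG_i\times_{G_i}\widehat F_i(\beps)$. Property (4) is read off the Step-1 decomposition: the base factors of $e(\Upsilon)\smile\barmu_p(z)\smile\barmu_c^{n-1}$ assemble into the classes $\hat\om_{i,j}$ --- polynomials in the characteristic classes of $\pi_i$ (including $\mu_{\fs}$) and in the Poincar\'e duals of $h$ and $x$ --- while the fiber factors assemble into $p_{i,j}(\tilde g_i^*\hat\nu^{d-j})$ with the $p_{i,j}$ universal constants determined by the fixed homotopy type of $\widehat F_i(\beps)$.

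The main obstacle I expect is the simultaneous construction of the fiberwise collapses: the maps $F_i(\beps)\to\widehat F_i(\beps)$ must be chosen so that, all at once, they (i) kill exactly the unwanted equivariant cohomology, uniformly over each overlap $\nu(\Si_i,\Si_j)$, (ii) intertwine with the re-splicing maps $\rho^d_{\Si_i,\Si_j}$ of \eqref{eq:SplicingPushout2} so that the pieces glue, and (iii) drop the codimension of every seam $t_j^{-1}(\eps_j)$ to at least two. Since an $\ell$-fold bubble stratum can be a common refinement of many lower strata at once, meeting all three conditions forces an induction over the codimension filtration of $\Sym^\ell(X)$, and it is precisely here --- not at any stage involving a gluing perturbation --- that having the explicit, associative splicing maps of \S\ref{sec:Overlaps} in place of the implicit gluing maps is indispensable.
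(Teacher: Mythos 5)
Your overall shape (collapse the seams of the pieces $\bL^{\vir}_{\ft,\fs}(\Si_i)$, extend each piece to a bundle over $M_{\fs}\times\cl(\Si_i)$, then read off properties (2)--(4) from the structure group) is the right one, but two of your central steps are not the construction that actually makes this work, and one of them is a genuine gap. First, defining $\widehat F_i(\beps)$ as ``the quotient of $F_i(\beps)$ that kills precisely the complementary equivariant cohomology'' is not a construction: there is in general no quotient map of spaces realizing a prescribed splitting of (equivariant) cohomology, and nothing in your proposal supplies one. The quotient has to be defined purely geometrically --- by collapsing the boundary faces \eqref{eq:DefineLinkPieceBoundary} --- and the cohomological statements \eqref{eq:CohomPulledBack} and \eqref{eq:QuotientCohom} are then \emph{verified} for that geometric quotient, not built into its definition.

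Second, and more seriously, your collapse acts only in the fiber direction: you collapse faces of the form $F_i(\beps)\cap t_j^{-1}(\eps_j)$, which by the compatible structure group condition are the faces with $i<j$. The faces with $j<i$ are of a different nature: by the Thom--Mather control conditions they are the restriction of the bundle \eqref{eq:LinkPieceBundle} to a boundary $M_{\fs}\times\rd_jK_i$ of the \emph{base}, and no fiberwise collapse touches them. The missing ingredient is a surjection $K_i\to\cl(\Si_i)$ --- obtained from the fact that $\cl(\Si_i)-\Si_i$ has a neighborhood deformation retraction in $\cl(\Si_i)$ --- which presents the extension of \eqref{eq:LinkPieceBundle} over $M_{\fs}\times\cl(\Si_i)$ as a quotient of $\bL^{\vir}_{\ft,\fs}(\Si_i)$ collapsing $\rd_j\bL^{\vir}_{\ft,\fs}(\Si_i)$, $j<i$, onto the part of the extended bundle lying over the lower strata of $\cl(\Si_i)$. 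In your version the map to the associated bundle over $\cl(\Si_i)$ is merely the inclusion over $K_i$, so it is not surjective onto $\widehat\bL^{\vir}_{\ft,\fs}(\Si_i)$, and the images of the base-direction seams remain codimension one, so property (1) --- and with it the identity $q_*[\rd\Gl(\ft,\fs,X)/S^1]=\sum_i[\widehat\bL^{\vir}_{\ft,\fs}(\Si_i)]$ --- fails for exactly those faces; your codimension argument only addresses the bubble-cone (fiber) direction. Once the base collapse is in place, the $i<j$ faces are handled as you intend (the equality $\rd_j\bL^{\vir}_{\ft,\fs}(\Si_i)=\rd_i\bL^{\vir}_{\ft,\fs}(\Si_j)$ forces the fiber quotient to be the one already defined on the neighboring piece, with compatible structure groups ensuring it is a quotient of the fiber preserving $G_i$), and your appeal to the splicing pushouts for consistency on multiple overlaps is a reasonable, if redundant, substitute for the checks the paper performs directly with the control data of Definition \ref{defn:LocalConeBundles} and Theorem \ref{thm:ConesControlled}; but as written the well-definedness of the iterated quotients on corner faces still needs to be verified rather than asserted.
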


\begin{proof}
We now sketch
the construction of the quotient $\widehat\bL^{\vir}_{\ft,\fs}$.
The quotient is defined by ``collapsing'' the boundaries of the
components of the link defined in \eqref{eq:DefineLinkPieceBoundary}.
From the compatible structure group conditions, we know that
for $i<j$, the boundary $\rd_j\bL^{\vir}_{\ft,\fs}(\Si)$
is a subbundle of the fiber bundle \eqref{eq:LinkPieceBundle}.
From the Thom-Mather control conditions, we know that
for $j<i$, the boundary $\rd_j\bL^{\vir}_{\ft,\fs}(\Si)$ is
the restriction of the bundle \eqref{eq:LinkPieceBundle} to
a boundary $M_{\fs}\times\rd_jK_i$ of the base.

We define the quotient of the boundary $\rd_j\bL^{\vir}_{\ft,\fs}(\Si)$
for $j<i$. First, we observe that $\cl(\Si_i)$ can be presented as a quotient of
$K_i$. Specifically, one notes that the
union of singular strata in $\cl(\Si_i)$, $\cl(\Si_i)-\Si_i$, is
a neighborhood deformation retraction in $\cl(\Si_i)$.  The restriction
of this retraction to $K_i\subset \Si_i$ can be used to define a surjective map
$K_i\to\cl(\Si_i)$ which we consider as a quotient map.
The fiber bundle \eqref{eq:LinkPieceBundle} extends
over $M_{\fs}\times\cl(\Si_i)$ and this extension can  be presented
as a quotient of $\bL^{\vir}_{\ft,\fs}(\Si)$ which
collapses the boundary $\rd_j\bL^{\vir}_{\ft,\fs}(\Si)$
for $j<i$ to the restriction of the extended fiber bundle over the lower
stratum  $\Si_j\subset\cl(\Si_i)$.

The definition of the boundaries of $\bL^{\vir}_{\ft,\fs}(\Si_i)$
in \eqref{eq:DefineLinkPieceBoundary} implies that
$$
\rd_j\bL^{\vir}_{\ft,\fs}(\Si_i)=\rd_i\bL^{\vir}_{\ft,\fs}(\Si_j).
$$
Thus, for $i<j$ we must take the quotient of the boundary
$\rd_j\bL^{\vir}_{\ft,\fs}(\Si_i)$ exactly as we have done for the
boundary $\rd_i\bL^{\vir}_{\ft,\fs}(\Si_j)$. Because the boundary
$\rd_j\bL^{\vir}_{\ft,\fs}(\Si_i)$ is a subbundle of the bundle
\eqref{eq:LinkPieceBundle}, we can then use the property of
compatible structure groups to argue that this quotient can be
obtained by taking a quotient of the fiber without changing the
structure group.  On the intersections,
$$
\rd_{j_1}\rd_{j_2}\dots \rd_{j_r}\bL^{\vir}_{\ft,\fs}(\Si_i)
=
\cap_{k=1}^r \rd_{j_k} \bL^{\vir}_{\ft,\fs}(\Si_i)
 \quad\text{for $i<j_1<\dots< j_r$},
$$
there are $r$ quotients, defined by the inclusions
$$
\rd_{j_1}\rd_{j_2}\dots \rd_{j_r}\rd_k\bL^{\vir}_{\ft,\fs}(\Si_i)
\subset
\rd_{j_k}\bL^{\vir}_{\ft,\fs}(\Si_i)
$$
and the quotients on $\rd_{j_k}\bL^{\vir}_{\ft,\fs}(\Si_i)$.
One must verify that these multiple quotients are well-defined
and respect the structure group.

The properties of the cohomology
class $\Om(z,n)$ are easily verified.
\end{proof}

Finally, we discuss how Theorem \ref{conj:PTConj} follows from
\ref{prop:QuotientProperties}. The first condition in Proposition
\ref{prop:QuotientProperties} yields the identity
\begin{equation}
\label{eq:DecomposeQuotientFund}
q_*[\rd\Gl(\ft,\fs,X)/S^1]
=
\sum_i [\widehat\bL^{\vir}_{\ft,\fs}(\Si_i)].
\end{equation}
The third property in Proposition \ref{prop:QuotientProperties}
and the pushforward-pullback formula
imply that
\begin{equation}
\label{eq:PushPullEquality}
(\hat\pi_i)_*(\tilde g_i^*\nu^k)
=g_i^*(m_{i*}\hat\nu^k)
\end{equation}
is given by a universal polynomial
in the characteristic classes of $\hat\pi_i$. We
then compute the intersection number in \eqref{eq:DualPairing} by
\begin{equation}
\label{eq:PairingComp}
\begin{aligned}
{}&
\langle\Om(z,n), [\rd\Gl(\ft,\fs,X)/S^1]\rangle
\\
{}&\quad=
\langle q^*\widehat\Om(z,n) ,[\rd\Gl(\ft,\fs,X)/S^1]\rangle
\quad\text{by \eqref{eq:CohomPulledBack}}
\\
{}&\quad=
\langle \widehat\Om(z,n),q_*[\rd\Gl(\ft,\fs,X)/S^1]\rangle
\\
{}&\quad=
\sum_i \langle \widehat\Om_i,[\widehat\bL^{\vir}_{\ft,\fs}(\Si_i)]\rangle
\quad\text{by \eqref{eq:DecomposeQuotientFund}}
\\
{}&\quad=
\sum_{i,j}
p_{i,j}\langle \tilde g_i^*\hat \nu^{d-j}\smile \hat\pi_i^*\hat \om_{i,j},
          [\widehat\bL^{\vir}_{\ft,\fs}(\Si_i)]\rangle
\quad\text{by \eqref{eq:QuotientCohom}}
\\
{}&\quad=
\sum_{i,j}
p_{i,j}\langle
(\hat\pi_i)_*( \tilde g_i^*\hat \nu^{d-j})\smile  \hat \om_{i,j},
[M_{\fs}\times \cl(\Si_i)]\rangle
\\
{}&\quad=
\sum_{i,j}
p_{i,j}\langle
g_i^*(m_{i*}\hat \nu^{d-j})\smile \hat \om_{i,j},
[M_{\fs}\times \cl(\Si_i)]
\rangle
\quad\text{by \eqref{eq:PushPullEquality}}
\end{aligned}
\end{equation}
By the characterization of $g_i^*(m_{i*}\hat \nu^{d-j})$ following
\eqref{eq:PushPullEquality}, we then see that the final expression
is the desired universal polynomial in the characteristic
classes appearing in Theorem \ref{conj:PTConj}.

\bibliographystyle{plain}

\end{document}